\newcommand{\ud}{\mathrm{d}}
\newcommand{\e}{\mathrm{e}}
\newcommand{\CR}{\mathds{R}}
\newcommand{\dev}[2]{\displaystyle\frac{\ud\emph{$#1$}}{\ud\emph{$#2$}}}
\newcommand{\fraction}{\displaystyle\frac}
\newcommand{\somatorio}[2]{\sum\limits_{\emph{$#1$}}^{\emph{$#2$}}}
\theoremstyle{plain}
\newtheorem{teo}{Theorem}[section]
\newtheorem{defi}{Definition}[section]
\newtheorem{lem}{Lemma}[section]
\newtheorem{prop}{Proposition}[section]
\newtheorem{coro}{Corollary}[section]
\theoremstyle{definition}
\newtheorem{rema}{Remark}[section]
\numberwithin{equation}{section}
\begin{document}

\title{On a Poincar\'{e} lemma for foliations}
\author{Eva Miranda}\address{ Eva Miranda,
Departament de Matem\`{a}tica Aplicada I, Universitat Polit\`{e}cnica de Catalunya, EPSEB, Avinguda del Doctor Mara\~{n}\'{o}n, 44-50, 08028, Barcelona, Spain, \it{e-mail: eva.miranda@upc.edu}}

 \author{Romero Solha}
 \address{ Romero Solha,
Departament de Matem\`{a}tica Aplicada I, Universitat Polit\`{e}cnica de Catalunya, ETSEIB, Avinguda Diagonal 647, 08028, Barcelona, Spain, \it{e-mail: romero.barbieri@upc.edu}}

\thanks{Both authors have been partially supported by the DGICYT/FEDER project MTM2009-07594: Estructuras Geometricas: Deformaciones, Singularidades y Geometria Integral until December 2012 and by the MINECO project GEOMETRIA ALGEBRAICA, SIMPLECTICA, ARITMETICA Y APLICACIONES with reference: MTM2012-38122-C03-01  starting in January 2013. This research has also been partially supported by ESF network CAST, \emph{Contact and Symplectic Topology}. Romero Solha has been partially supported by Start-Up Erasmus Mundus External Cooperation Window
2009-2010 project.}
\date{\today}

%#############################################################################################

\begin{abstract} In this paper we  revisit a Poincar\'{e} lemma for foliated forms, with respect to a regular foliation, and compute the foliated cohomology for local models of integrable systems with singularities of  nondegenerate type. A key point in this computation is the use of some analytical tools for integrable systems with nondegenerate singularities, including a Poincar\'{e} lemma for the deformation complex associated to this singular foliation.
\end{abstract}

%#############################################################################################

\maketitle

\section{Introduction}

In \cite{MiNg} Vu Ngoc and the first author of this paper proved a \emph{singular Poincar\'{e} lemma} for the deformation complex of an integrable system with nondegenerate singularities. This complex is the Chevalley-Eilenberg complex \cite{chevalley-eilenberg} associated to a representation by Hamiltonian vector fields of this integrable system on the set of functions (modulo basic functions). The initial motivation for \cite{MiNg}  was to give a complete proof for a crucial lemma used in proving a deformation result for pairs of local integrable systems with compatible symplectic forms. This deformation proves a Moser path lemma which is a key point in establishing symplectic normal forms \emph{à la Morse-Bott} for integrable systems with nondegenerate singularities \cite{eli1,eli2,Mi}. This normal form proof can be seen as a \emph{``infinitesimal stability theorem implies stability''} result in this context (see \cite{evaintegrablegroup}). So the Poincar\'{e} lemma turns out to be an important ingredient  in the study of the Symplectic Geometry of integrable systems with singularities.

In this paper we use the Poincar\'{e} lemma of the deformation complex to compute some cohomology groups associated to the singular foliation defined by the Hamiltonian vector fields of an integrable system. In particular, we consider the analytic case, in which this computation becomes simpler and can be done in full generality.

A Poincar\'{e} lemma exists when the foliation is regular, and an offspring of this is a Poincar\'{e} lemma in the context of Geometric Quantization, in which the considered complex is a twisted complex from foliated cohomology: the \emph{Kostant complex}. This Poincar\'{e} lemma turns out to be handy because it allows to compute a sheaf cohomology associated to Geometric Quantization. We enclose a sketch of the proof of these two Poincar\'{e} lemmata.
	
If we consider singularities into the picture, the whole scenario changes. As concerns the analytical tools, what makes the difference between the regular and singular case are the solutions of the equation $X(f)=g$ for a given $g$ and a given vector field $X$. When the vector field is regular, we can solve this equation by simple integration no matter which function $g$ is considered. If the vector field is singular, then this is a nontrivial question. Solutions may exist or not depending on some properties of the function $g$ and the singularity of the vector field $X$. For instance, solutions of this equation are studied in \cite{guilleminfuchsian}.

The nonexistence of solutions of equations of type $X(f)=g$ are interpreted in this paper as an obstruction for local solvability of the cohomological equation $\ud_\mathcal{F}\beta=\alpha$, for a given foliated closed $k$-form $\alpha$. Indeed, the fact that the vector fields defining the foliation commute adds an additional ingredient for the simultaneous solution of several equations of this type, which was already exploited in \cite{MiNg} and is further studied in this paper.

{\bf{Organization of this paper:}} In section 2 we describe the geometry of the singular foliations considered in this paper. We recall in section 3 the singular Poincar\'{e} lemma for a deformation complex contained in \cite{MiNg}. We revisit in section 4 the proof of the regular Poincar\'{e} lemma using homotopy operators provided in \cite{geometricasymptotics}, indicate how to apply these techniques to prove a Poincar\'{e} lemma for regular foliations and show an application to Geometric Quantization. In section 5 we consider the case when the foliation given by the integrable system has rank $0$ singularities and compute the foliated cohomology groups.

%#######################################################################################################################

\section{Singular foliations given by nondegenerate integrable systems}

An integrable system on a symplectic manifold $(M,\omega)$ of dimension $2n$ is a set of $n$ functions $f_1,\dots,f_n\in C^\infty(M)$ satisfying $\ud f_1\wedge\cdots\wedge\ud f_n\neq 0$ over an open dense subset of $M$ and $\{f_i,f_j\}=0$ for all $i,j$. The mapping $F=(f_1,\dots,f_n):M\to\CR^n$ is called a moment map.

The Poisson bracket is defined by $\{f,g\}=X_f(g)$, where $X_f$ is the unique vector field defined by $\imath_{X_f}\omega=-\ud f$: the Hamiltonian vector field of $f$.

The distribution generated by the Hamiltonian vector fields of the moment map, $\langle X_{f_1},\dots,X_{f_n}\rangle$, is involutive because $[X_f, X_g]=X_{\{f,g\}}$. Since $0=\{f_i,f_j\}=\omega(X_{f_i}, X_{f_j})$, the leaves of the associated (possibly singular) foliation are isotropic submanifolds; they are Lagrangian at points where the functions are functionally independent.\par

There is a notion of nondegenerate singular points which was initially introduced by Eliasson \cite{eli1,eli2}. We may consider different ranks for the singularity. To  define the $k$-rank case we reduce to the $0$-rank case considering a Marsden-Weinstein reduction associated to a natural Hamiltonian $\mathds{T}^k$-action \cite{Zu1,mirandazungequiv} given by the joint flow of the moment map $F$.

We denote by $(x_1,y_1,\dots,x_n,y_n)$ a set of coordinates centred at the origin of $\CR^{2n}$ and by $\somatorio{i=1}{n} \ud x_i\wedge \ud y_i$ the Darboux symplectic form.

In the rank zero case, since the functions $f_i$ are in involution with respect to the Poisson bracket, their quadratic parts commute, defining in this way an Abelian subalgebra of $Q(2n,\CR)$ (the set of quadratic forms on $2n$-variables). These singularities are said to be of nondegenerate type if this subalgebra is a Cartan subalgebra.

Cartan subalgebras of $Q(2n,\CR)$ were classified by Williamson in \cite{williamson}.

\begin{teo}[Williamson]{\label{Willi}}
  For any Cartan subalgebra $\mathcal H$ of $Q(2n,\CR)$ there is
  a symplectic system of coordinates $(x_1,y_1,\dots,x_n,y_n)$ in
  $\CR^{2n}$ and a basis $h_1,\dots,h_n$ of $\mathcal H$ such
  that each $h_i$ is one of the following:
  \begin{equation}
    \begin{array}{lcr}
      h_i = x_i^2 + y_i^2 & \textup{for }  1 \leq i \leq k_e \ , &
\textup{(elliptic)}  \\
      h_i = x_iy_i  &  \textup{for }  k_e+1 \leq i \leq k_e+k_h \ , &
      \textup{(hyperbolic)}\\
      \begin{cases}
        h_i = x_i y_i + x_{i+1} y_{i+1} , \\
        h_{i+1} = x_i y_{i+1}-x_{i+1} y_i
      \end{cases} &
      \begin{array}{c}
        \textup{for }  i = k_e+k_h+ 2j-1,\\ 1\leq j \leq k_f
      \end{array}
      &
      \textup{(focus-focus pair)}
 \end{array}
  \end{equation}
\end{teo}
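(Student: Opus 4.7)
The plan is to translate the question into the language of the symplectic Lie algebra. The map $h\mapsto X_h$ sending a quadratic form to its linear Hamiltonian vector field identifies $Q(2n,\mathbb{R})$ equipped with the Poisson bracket with $\mathfrak{sp}(2n,\mathbb{R})$, since $[X_f,X_g]=X_{\{f,g\}}$ and the assignment is a linear isomorphism between spaces of equal dimension $n(2n+1)$. Under this identification a Cartan subalgebra $\mathcal{H}\subset Q(2n,\mathbb{R})$ corresponds to a Cartan subalgebra of $\mathfrak{sp}(2n,\mathbb{R})$ in the standard Lie-theoretic sense, that is, a maximal Abelian subalgebra of semisimple elements; in particular $\dim\mathcal{H}=n$, the rank of $\mathfrak{sp}(2n,\mathbb{R})$.

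Next I would recall the symplectic normal form for a single semisimple $X\in\mathfrak{sp}(2n,\mathbb{R})$. Because $X$ preserves the symplectic form, its complex eigenvalues occur in $\pm$-pairs, and over $\mathbb{R}$ the indecomposable $X$-invariant symplectic subspaces fall into exactly three types, corresponding to the three rows of the statement: a $2$-plane with purely imaginary eigenvalues $\pm i\alpha$ on which Darboux coordinates put $X$ in the form induced by $x^2+y^2$ (elliptic); a $2$-plane with real eigenvalues $\pm\alpha$ on which $X$ is induced by $xy$ (hyperbolic); and a $4$-dimensional block with complex eigenvalues $\pm\alpha\pm i\beta$ on which $X$ is jointly generated by the two quadratic forms of the focus-focus pair.

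Since the elements of $\mathcal{H}$ pairwise commute and are simultaneously semisimple, they admit a common generalised eigenspace decomposition of $\mathbb{R}^{2n}$, which I would refine into an $\mathcal{H}$-invariant symplectic direct sum of indecomposable blocks of the three types above. On each such block the restriction of $\mathcal{H}$ is a commutative subalgebra of the corresponding $\mathfrak{sp}(2,\mathbb{R})$ or $\mathfrak{sp}(4,\mathbb{R})$, hence of dimension at most $1$ or $2$ respectively; summing these bounds and using $\dim\mathcal{H}=n$ forces every block to attain its maximal contribution, yielding the announced basis $h_1,\dots,h_n$ in the resulting global Darboux coordinates.

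The main obstacle is the simultaneous symplectic block decomposition. One must verify that the joint real generalised eigenspace decomposition determined by the commuting family $\mathcal{H}$ is $\omega$-orthogonal, that each block inherits a symplectic form, and that on each indecomposable block a Darboux basis can be chosen realising the three model normal forms. The focus-focus case is the most delicate, since the indecomposable real block has dimension $4$ and its Darboux basis must mix the complex-conjugate pair of generalised eigenspaces in a way compatible with both the symplectic structure and the full commuting family.
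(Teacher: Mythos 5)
The paper does not prove this statement: it is Williamson's classical classification, quoted with a citation to \cite{williamson}, so there is no in-paper argument to compare against. Your Lie-theoretic route (identify $Q(2n,\CR)$ with $\mathfrak{sp}(2n,\CR)$ via $h\mapsto X_h$, decompose $\CR^{2n}$ into joint weight spaces of the commuting family, and normalise block by block) is the standard modern proof and is genuinely different from Williamson's original matrix-pencil computation; as an outline it is essentially sound, and the $\omega$-orthogonality of weight spaces for distinct weights $\lambda+\mu\neq 0$ is the right mechanism for getting a symplectic direct sum.

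Two points need repair before this is a proof. First, your dimension count invokes ``a commutative subalgebra of $\mathfrak{sp}(4,\CR)$, hence of dimension at most $2$,'' which is false: $\mathfrak{sp}(4,\CR)$ contains abelian subalgebras of dimension $3$ (e.g.\ the nilpotent subalgebra $\left\{\left(\begin{smallmatrix}0&S\\0&0\end{smallmatrix}\right): S=S^{T}\right\}$). The bound you need is for \emph{toral} subalgebras, so you must first observe that the restriction of a semisimple operator to an invariant subspace is semisimple, hence $\mathcal H|_{B}$ is toral in $\mathfrak{sp}(B)$ and its dimension is at most the rank; you should also rule out a nonzero joint kernel $V_{0}=\{v: X_{h}v=0\ \forall h\in\mathcal H\}$, since otherwise $\mathcal H$ could be enlarged by a torus of $\mathfrak{sp}(V_{0})$, contradicting maximality (and the count $\sum_B \mathrm{rank}=n$ would fail). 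Second, the step you yourself flag as ``the main obstacle'' is precisely where the content of the theorem lives: on each indecomposable block one must actually produce a Darboux basis realising $x^{2}+y^{2}$, $xy$, or the focus-focus pair. For the elliptic block this requires diagonalising the Hermitian form $i\omega(v,\bar w)$ on the eigenspace and absorbing a possible overall sign into the choice of basis vector of $\mathcal H$; for the focus-focus block one must exhibit the $4$-dimensional real symplectic normal form mixing the eigenspaces for $\pm\alpha\pm i\beta$. Deferring these computations leaves the proof incomplete, though no step is wrong in principle.
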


Observe that the number of elliptic components $k_e$, hyperbolic
components $k_h$ and focus-focus components $k_f$ is therefore an
invariant of the algebra $\mathcal H$. The triple $(k_e,k_h,k_f)$ is an invariant of the singularity and it is
called the Williamson type of $\mathcal H$. We have that
$n=k_e+k_h+2k_f$.  Let $h_1,\dots,h_n$ be a Williamson basis of this
Cartan subalgebra. We denote by $X_i$ the Hamiltonian vector field of
$h_i$ with respect to the Darboux form. These vector fields are a basis of the
corresponding Cartan subalgebra of $\mathfrak{sp}(2n,\CR)$. We say
that a vector field $X_i$ is hyperbolic (resp. elliptic) if the
corresponding function $h_i$ is so. We say that a pair of vector
fields $X_i,X_{i+1}$ is a focus-focus pair if $X_i$ and $X_{i+1}$ are
the Hamiltonian vector fields associated to functions $h_i$ and
$h_{i+1}$ in a focus-focus pair.

In the local coordinates specified above, the vector fields $X_i$ take
the following form:

\begin{itemize}
\item $X_i$ is an elliptic vector field, \begin{equation}X_i=
  2\left(-y_i\frac{\partial}{\partial x_i} +x_i{\frac{\partial}{\partial
      y_i}}\right) \ ;\end{equation}
\item $X_i$ is a hyperbolic vector field,
  \begin{equation}X_i=-x_i\frac{\partial}{\partial x_i}+y_i{\frac{\partial}{\partial
      y_i}} \ ;\end{equation}
\item $X_i,X_{i+1}$ is a focus-focus pair,
  \begin{equation}X_i=-x_i\frac{\partial}{\partial
    x_{i}}+y_{i}\frac{\partial}{\partial
    y_i}-x_{i+1}\frac{\partial}{\partial
    x_{i+1}}+y_{i+1}\frac{\partial}{\partial y_{i+1}}\end{equation}and
  \begin{equation}X_{i+1}=x_{i+1}\frac{\partial}{\partial
    x_i}+y_{i+1}\frac{\partial}{\partial
    y_i}-x_i\frac{\partial}{\partial
    x_{i+1}}-y_i\frac{\partial}{\partial y_{i+1}} \ .\end{equation}
\end{itemize}

Assume that $\mathcal{F}$ is a linear foliation on $\CR^{2n}$ with a rank 0 singularity at the
origin. Assume that the Williamson type of the singularity is
$(k_e,k_h,k_f)$. The linear model for the foliation is then generated by the
vector fields above, it turns out that these type of singularities are symplectically linearizable and we can read of the local symplectic geometry of the foliation from the algebraic data associated to the singularity (Williamson type).

This is the content of the following symplectic linearization result in \cite{eli1},\cite{eli2},\cite{Mi} (smooth category) and \cite{v} (analytic category),

\begin{teo}\label{thm:rank0uni}
Let $\omega$ be a smooth (resp. analytic) symplectic form defined in a neighbourhood $U$ of the origin and $\mathcal F$ a linear foliation with a rank zero singularity, of prescribed Williamson type, at the origin. Then, there exists a local diffeomorphism (resp. analytic diffeomorphism) $\phi:U\longrightarrow \phi(U)\subset\CR^{2n}$ such that $\phi$ preserves the foliation and $\phi^*(\somatorio{i=1}{n} \ud x_i\wedge \ud y_i)=\omega$, with $(x_1,y_1,\dots,x_n,y_n)$ local coordinates on $\phi(U)$.
\end{teo}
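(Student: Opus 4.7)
The plan is to carry out a Moser path argument adapted to the foliated setting, using the singular Poincar\'{e} lemma for the deformation complex (stated in the preceding section and taken from \cite{MiNg}) as the key analytical input.

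\textbf{Step 1: Pointwise normalization at the origin.} Because $\mathcal{F}$ is linear and vanishes at the origin, its tangent space there is trivial, and there is no isotropy constraint to match. Nevertheless, we first want $\omega$ and $\omega_0 = \sum_{i=1}^n \ud x_i\wedge \ud y_i$ to agree at the origin. By applying a linear symplectic change of coordinates that preserves the Williamson foliation (which exists because the linearized stabilizer acts transitively on the relevant space of symplectic forms of the correct type), we may assume that $\omega|_0 = \omega_0|_0$. Consequently, the family $\omega_t = (1-t)\omega_0 + t\omega$, $t\in[0,1]$, is nondegenerate at the origin and hence symplectic on a (possibly smaller) neighborhood $U$ of $0$.

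\textbf{Step 2: Moser path setup.} By the classical Poincar\'{e} lemma, $\omega - \omega_0 = \ud\beta$ for some $1$-form $\beta$ vanishing to first order at the origin. Our goal is to construct a time-dependent vector field $Z_t$, vanishing at the origin and tangent to the sheaf of vector fields preserving $\mathcal{F}$, such that its flow $\phi_t$ satisfies $\phi_t^*\omega_t = \omega_0$. Differentiating the identity $\phi_t^*\omega_t = \omega_0$ and using $\frac{d}{dt}\omega_t = \omega-\omega_0 = \ud\beta$ reduces the problem to the cohomological equation
\begin{equation*}
\iota_{Z_t}\omega_t = -\beta.
\end{equation*}
Since $\omega_t$ is nondegenerate, $Z_t$ is uniquely determined by any choice of $\beta$; the freedom we have is to replace $\beta$ by $\beta + \ud g$ for a suitable function $g$, which changes $Z_t$ by a Hamiltonian vector field with respect to $\omega_t$.

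\textbf{Step 3: Reducing foliation preservation to the deformation complex.} The condition that $\phi_t$ preserves $\mathcal{F}$ is equivalent to $[Z_t, X_i]$ lying in $\langle X_1,\ldots,X_n \rangle$ for each generator $X_i$ of $\mathcal{F}$. Writing this out in coordinates, and using that the $X_i$ commute and are Hamiltonian for $\omega_0$ with Hamiltonians $h_i$, one translates these bracket conditions into a system of first order PDEs $X_i(g_j) = F_{ij}$, where the right-hand sides $F_{ij}$ are determined by $\beta$ and satisfy compatibility conditions $X_i(F_{jk}) = X_j(F_{ik})$ forced by $[X_i,X_j]=0$. This is precisely the type of system for which the singular Poincar\'{e} lemma for the deformation complex of \cite{MiNg} (Theorem in Section~3) provides a solution: the compatibility conditions are equivalent to a cocycle in the deformation complex, and the lemma produces a primitive.

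\textbf{Step 4: Conclusion and main obstacle.} With $g$ furnished by the singular Poincar\'{e} lemma, the corrected primitive $\beta + \ud g$ yields a $Z_t$ that both solves the Moser equation and preserves $\mathcal{F}$; its time-one flow gives the required diffeomorphism $\phi$ (or rather its inverse, depending on conventions). The main obstacle is Step~3: in the presence of hyperbolic and focus-focus blocks the equation $X_i(g) = F$ is genuinely obstructed, and both the statement of the correct compatibility conditions and their sufficiency rely on the careful Chevalley--Eilenberg cohomological analysis of \cite{MiNg}. In the analytic category one may alternatively argue \emph{\`{a} la} Vey \cite{v} via convergent formal expansions, which avoids any flat-function discussion but still requires the vanishing of the relevant deformation cohomology; in the smooth category, Eliasson's equivariant techniques \cite{eli1,eli2} together with the Moser argument above complete the proof.
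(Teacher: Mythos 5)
The paper itself offers no proof of this theorem: it is imported wholesale from \cite{eli1,eli2,Mi} (smooth case) and \cite{v} (analytic case), so there is no internal argument to compare yours against. Your Moser-path strategy corrected by the singular Poincar\'{e} lemma is indeed the strategy of those references (and the one the introduction alludes to), but the step you yourself flag as the crux, Step 3, is not a correct reduction as written, and two essential points are missing.

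First, the statement is false without the implicit hypothesis that the leaves of $\mathcal F$ are isotropic (hence Lagrangian) for $\omega$ and not only for $\omega_0=\somatorio{i=1}{n}\ud x_i\wedge\ud y_i$; you never invoke this, yet it is exactly what makes the machine run. It gives $(\omega-\omega_0)(X_i,X_j)=0$, whence the functions $g_i=\beta(X_i)$ satisfy $X_i(g_j)-X_j(g_i)=\ud\beta(X_i,X_j)=0$ (using $[X_i,X_j]=0$); this, and not your bracket condition $[Z_t,X_i]\in\gener{X_1,\dots,X_n}$, is the cocycle condition to which Theorem \ref{theo:principal} applies. Your system ``$X_i(g_j)=F_{ij}$ with $X_i(F_{jk})=X_j(F_{ik})$'' does not match the hypotheses of that theorem, and in any case the only freedom available is a single function $g$ (replacing $\beta$ by $\beta+\ud g$), not a family $g_j$. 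Second, Theorem \ref{theo:principal} only yields $g_i=f_i+X_i(G)$ with $f_i$ basic, so after replacing $\beta$ by $\beta-\ud G$ you are left with $\beta(X_i)=f_i$, not $0$; disposing of this basic residue is a separate step in \cite{MiNg,Mi} that your sketch omits. Third, once $\beta(X_i)=0$ is achieved, the reason the flow preserves $\mathcal F$ is that $\iota_{Z_t}\omega_t=-\beta$ forces $Z_t$ into the $\omega_t$-orthogonal of $T\mathcal F$, which equals $T\mathcal F$ at regular points because the leaves are Lagrangian for every $\omega_t$; extending this across the singular set requires a density argument that should be made explicit. Finally, the transitivity claim in your Step 1 is not free: it is essentially Theorem \ref{Willi} applied to $\omega|_0$ and needs to be argued. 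None of this invalidates the strategy, but as it stands the proposal defers precisely the steps that constitute the proof.
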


Futhermore, if $\mathcal{F}'$ is a foliation that has $\mathcal{F}$ as a linear foliation model near a point, one can symplectically linearize $\mathcal{F}'$ (see \cite{Mi}).

This is equivalent to Eliasson's theorem \cite{eli1, eli2} when the Williamson type of the singularity is $(k_e,0,0)$.

The classification of singularities of integrable system changes in the analytic category. This was already considered by Vey in \cite{v} and it is simpler because the Williamson type of the singularities is
$(k_e,k_h,0)$.

There are normal forms for higher rank which have been obtained by the first author together with Nguyen Tien Zung \cite{Mi, mirandazungequiv} also in the case of singular nondegenerate compact orbits. When the rank of the singularity is greater than 0, a collection of regular vector fields is also attached to it.

%#######################################################################################################################

\section{A singular Poincar\'{e} lemma for a deformation complex}

This section revisits the main results contained in \cite{MiNg}.

Consider the family  $X_i$ of  singular vector fields given by Williamson's theorem above which form a
basis of a Cartan subalgebra of the Lie algebra $\mathfrak{sp}(2r,\CR)$ with $r\leq n$.

\begin{teo}[Miranda and Vu Ngoc]
  \label{theo:principal}
  Let $g_1,\dots g_r$, be a set of smooth functions on
  $\CR^{2n}$ with $r\leq n$ fulfilling the following commutation
  relations
  \begin{equation}X_i(g_j)=X_j(g_i), \quad \forall i,j \in\{1,\dots,r\}\end{equation}
  where the
  $X_i$'s are the vector fields defined above.  Then there exists a smooth function $G$ and $r$ smooth functions $f_i$
  such that,
\begin{equation} X_j(f_i)=0 \ ,  \ \forall  \ i,j \in\{1,\dots,r\} \ \ \text{and} \end{equation}
\begin{equation} g_i=f_i+X_i(G) \ , \ \forall \ i \in\{1,\dots,r\} \ . \end{equation}
\end{teo}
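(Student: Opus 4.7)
The plan is to reduce the theorem to a single-vector-field Poincar\'{e} lemma and then induct on $r$. For each singular vector field $X_i$ of the Williamson basis I aim to build a projection $\pi_i: C^\infty(\CR^{2n}) \to \ker(X_i)$ and a right-inverse $K_i$ of $X_i$ on a complement, so that every smooth $g$ decomposes as $g = \pi_i(g) + X_i(K_i g)$. The crucial extra property is that both $\pi_i$ and $K_i$ commute with every $X_j$ with $j\neq i$; this should be automatic since $[X_i,X_j]=0$ and the operators will be built intrinsically from the $X_i$-dynamics.

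The base case $r=1$ is the technical heart and must be treated separately for each Williamson type. For an elliptic $X_1 = 2(-y_1\partial_{x_1} + x_1\partial_{y_1})$, passing to polar coordinates $(r_1,\theta_1)$ gives $X_1 = 2\partial_{\theta_1}$; a Fourier expansion in $\theta_1$ produces $\pi_1(g_1)$ as the zero-mode (i.e.\ the $S^1$-average) and $K_1 g_1$ as term-wise integration of the nonzero modes, with standard arguments securing smoothness across $r_1=0$. For a hyperbolic $X_1 = -x_1\partial_{x_1} + y_1\partial_{y_1}$, I would Taylor expand at the origin: monomials $x_1^a y_1^b$ with $a\neq b$ lie in the image of $X_1$ and invert termwise, while $\ker(X_1)$ is generated by $(x_1 y_1)^a$; a Borel summation recovers a smooth primitive for the formal part, and a separate treatment handles the flat remainder. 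The focus-focus case combines the two via the underlying complex structure on the coordinate pair.

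For the inductive step, given $g_1,\ldots,g_r$ satisfying $X_i(g_j)=X_j(g_i)$, set $f_1 = \pi_1(g_1)$ and $G_1 = K_1 g_1$, so that $g_1 = f_1 + X_1(G_1)$. For every $j\geq 2$,
\begin{equation*}
X_j(f_1) = X_j(\pi_1(g_1)) = \pi_1(X_j(g_1)) = \pi_1(X_1(g_j)) = 0,
\end{equation*}
where the middle equality uses commutativity of $\pi_1$ with $X_j$ and the last equality uses that $\pi_1$ vanishes on $\mathrm{im}(X_1)$ by construction. Setting $g_j' = g_j - X_j(G_1)$ for $j\geq 2$, one verifies $X_1(g_j') = X_j(g_1) - X_j X_1(G_1) = X_j(f_1) = 0$ and that the mutual compatibility $X_i(g_j')=X_j(g_i')$ persists for $i,j\geq 2$ because $[X_i,X_j]=0$. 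The $g_j'$ now lie in $\ker(X_1)$, a subspace preserved by every $\pi_j$ and $K_j$ for $j\geq 2$ by commutativity, so the induction hypothesis applied to $g_2',\ldots,g_r'$ produces $f_2,\ldots,f_r, G_2\in\ker(X_1)$ with $X_i(f_j)=0$ for $2\leq i,j\leq r$ and $g_j' = f_j + X_j(G_2)$. In particular $X_1(f_j)=0$ for every $j$, and taking $G = G_1 + G_2$ completes the construction.

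The main obstacle is the base case for the hyperbolic and focus-focus singularities: without an $S^1$-action one cannot simply average, so the construction of smooth $\pi_1, K_1$ requires Taylor/Borel summation for the formal part plus separate control of the flat remainder at the origin. Once the base case is established together with the commutativity of $\pi_1, K_1$ with the remaining $X_j$ (which is natural from the $X_j$-equivariance of the $X_1$-dynamics), the inductive assembly is essentially formal.
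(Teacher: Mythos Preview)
The paper does not prove this theorem here; it is quoted from \cite{MiNg}, and the single--vector--field decomposition you propose as the base case is exactly what the present paper records as Lemma~\ref{Evalemma} (items (1)--(4) are precisely the properties $X_i(f_i)=0$, uniqueness up to flat functions, and equivariance under the remaining $X_j$ that your induction needs). Your scheme---build $\pi_i,K_i$ from the $X_i$-dynamics so that they commute with the other $X_j$, then peel off one vector field at a time---is the strategy of \cite{MiNg}, and your identification of the hyperbolic/focus-focus flat remainder as the only genuine analytic difficulty is accurate.

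One point to tighten: your line $\pi_1(X_1(g_j))=0$ presupposes $\operatorname{im}(X_1)\cap\ker(X_1)=0$, which is true in the elliptic case but fails for hyperbolic $X_1$ (the intersection consists of flat functions at $\Sigma_1$). The fix, implicit in \cite{MiNg} and in item~(2) of Lemma~\ref{Evalemma}, is that every flat function at $\Sigma_1$ lies in $\operatorname{im}(X_1)$, so one may normalise $\pi_1$ to land in the non-flat part of $\ker(X_1)$; with that choice $\pi_1\circ X_1=0$ does hold and your induction goes through unchanged.
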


It is also included in \cite{MiNg} an interesting reinterpretation of this statement in terms of the deformation complex associated to an integrable system. We think that it is instructive to explain this succinctly here (we refer the reader to \cite{SanVN} and \cite{MiNg} for more details).

Using the same notation of the last section, let $\mathbf{h}=\langle h_1,\dots ,h_n\rangle_\CR$ and $\mathcal{C}_\mathbf{h}=\{f\in C^\infty(\CR^{2n}) \  ; \ X_h(f)=0, \ \forall \ h\in\mathbf{h} \}$. The set $\mathbf{h}$ is an Abelian Lie subalgebra of $(C^\infty(\CR^{2n}),\{\cdot,\cdot\})$ and $\mathcal{C}_\mathbf{h}$ is its centralizer.

The components of the moment map induce a representation of the commutative Lie algebra $\CR^n$ on $C^\infty(\CR^{2n})$,
\begin{equation}\CR^n\times C^\infty(\CR^{2n})\ni (v,f)\mapsto \{\mathbf{h}(v),f\}\in C^\infty(\CR^{2n}) \ .
\end{equation}Where, denoting by $(e_1,\dots,e_n)$ a basis of $\CR^n$, $v=v_1e_1+\cdots+v_ne_n$ and
\begin{equation}\{\mathbf{h}(v),f\}=v_1X_1(f)+\cdots+v_nX_n(f) \ .
\end{equation}

We can consider two Chevalley-Eilenberg complexes with the above action in mind, and the deformation complex is built from them. The first is the Chevalley-Eilenberg complex of $\CR^n$ with values in  $C^\infty(\CR^{2n})$, we denote $\mathrm{Hom}_\CR(\wedge^k\CR^n;C^\infty(\CR^{2n}))$ by $A^k$:
\begin{equation}
0\longrightarrow C^\infty(\CR^{2n})\longrightarrow A^1\longrightarrow A^2\longrightarrow A^3\longrightarrow\cdots \ .
\end{equation}The second is the Chevalley-Eilenberg complex of $\CR^n$ with values in  $C^\infty(\CR^{2n})/\mathcal{C}_\mathbf{h}$ (with respect to this action, $\CR^n$ acts trivially on $\mathcal{C}_\mathbf{h}$), where we denote $\mathrm{Hom}_\CR(\wedge^k\CR^n;C^\infty(\CR^{2n})/\mathcal{C}_\mathbf{h})$ by $B^k$:
\begin{equation}
0\longrightarrow C^\infty(\CR^{2n})/\mathcal{C}_\mathbf{h}\longrightarrow B^1\longrightarrow B^2\longrightarrow B^3\longrightarrow\cdots \ .
\end{equation}

Finally we define the \emph{deformation complex} as follows:
\begin{equation}
0\longrightarrow C^\infty(\CR^{2n})/\mathcal{C}_\mathbf{h}\stackrel{\bar{\ud}_{\mathbf{h}}}{\longrightarrow}B^1\stackrel{\partial_{\mathbf{h}}}{\longrightarrow}A^2\stackrel{\ud_{\mathbf{h}}}{\longrightarrow}A^3\stackrel{\ud_{\mathbf{h}}}{\longrightarrow}\cdots \ ,
\end{equation}the map $\partial_{\mathbf{h}}$ is defined by the following diagram (where all small triangles are commutative):
\[\xymatrix{ 0 \ar[r] &
  C^\infty(\CR^{2n}) \ar[r]^-{\ud_{\mathbf{h}}} \ar[d] &
 A^1 \ar[r]^-{\ud_{\mathbf{h}}} \ar[d] &
 A^2 \ar[r]^-{\ud_{\mathbf{h}}} \ar[d] &
  {\dots} \\
  0 \ar[r] &
 C^\infty(\CR^{2n})/\mathcal{C}_\mathbf{h} \ar[r]_-{\bar{\ud}_{\mathbf{h}}}
  \ar[ur]^{\partial_{\mathbf{h}}} &
  B^1 \ar[r]_-{\bar{\ud}_{\mathbf{h}}}
  \ar[ur]^{\partial_{\mathbf{h}}}&
  B^2 \ar[r]_-{\bar{\ud}_{\mathbf{h}}}
  \ar[ur]^{\partial_{\mathbf{h}}} &
  {\dots} }\]The cohomology groups associated to this complex are denoted by $H^k(\mathbf{h})$.

If $\alpha$ is a $1$-cocycle, then  for any smooth function $g_i$ with $\alpha(e_i)=[g_i]\in C^\infty(\CR^{2n})/\mathcal{C}_\mathbf{h}$ the commutation condition $X_i(g_j)=X_j(g_i)$ is fulfilled. Theorem \ref{theo:principal} says that there exists a function $G$ such that $g_i=f_i+ X_i(G)$, so $[g_i]=[X_i(G)]$ and this is exactly the coboundary condition.

Theorem \ref{theo:principal} combined with theorem \ref{thm:rank0uni} can be, then, reformulated as follows:

\begin{teo}[Miranda and Vu Ngoc]

 An integrable system with nondegenerate singularities
  is $C^\infty$-infinitesimally stable at the singular point,
  that is,
  \begin{equation}
  H^1(\mathbf{h})=0 .
  \end{equation}
\end{teo}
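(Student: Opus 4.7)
The plan is to identify the 1-cocycle condition in the deformation complex with the commutation hypothesis of Theorem \ref{theo:principal}, and then read off the primitive directly from that theorem's conclusion. I treat the rank zero case, to which higher-rank nondegenerate singularities reduce after supplementing the Williamson directions with the regular vector fields coming from the Hamiltonian torus action and invoking a standard Poincar\'e lemma for commuting regular vector fields.

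First, I would invoke Theorem \ref{thm:rank0uni} to symplectically linearize the foliation in a neighbourhood of the singular point, so that a Williamson basis $h_1,\ldots,h_n$ of $\mathbf{h}$ has Hamiltonian vector fields $X_1,\ldots,X_n$ given by the explicit elliptic, hyperbolic and focus-focus expressions of Section 2. These are precisely the vector fields to which Theorem \ref{theo:principal} applies, with $r=n$.

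Next, let $\alpha \in B^1 = \mathrm{Hom}_\CR(\CR^n;\,C^\infty(\CR^{2n})/\mathcal{C}_\mathbf{h})$ be a 1-cocycle and choose any lift of each $\alpha(e_i)$ to a smooth function $g_i \in C^\infty(\CR^{2n})$. A short diagram chase through the defining diagram of $\partial_\mathbf{h}$ shows that $\partial_\mathbf{h}\alpha \in A^2$ sends $e_i \wedge e_j$ to $X_i(g_j) - X_j(g_i)$; this is independent of the chosen lifts because each $X_k$ annihilates $\mathcal{C}_\mathbf{h}$. The cocycle condition $\partial_\mathbf{h}\alpha = 0$ therefore reads exactly
\begin{equation*}
X_i(g_j) = X_j(g_i), \qquad \forall\, i,j \in \{1,\ldots,n\},
\end{equation*}
which is the commutation hypothesis of Theorem \ref{theo:principal}. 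That theorem then produces $G \in C^\infty(\CR^{2n})$ and functions $f_i \in \mathcal{C}_\mathbf{h}$ with $g_i = f_i + X_i(G)$. Passing modulo $\mathcal{C}_\mathbf{h}$ yields $\alpha(e_i) = [X_i(G)] = (\bar{\ud}_\mathbf{h}[G])(e_i)$, so $\alpha = \bar{\ud}_\mathbf{h}[G]$ is a coboundary and $H^1(\mathbf{h}) = 0$.

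The main obstacle does not lie in this cohomological reformulation, which is essentially bookkeeping on the diagram defining the deformation complex: all the analytic content is packaged inside Theorem \ref{theo:principal}. Solving $X_i(G) \equiv g_i \pmod{\mathcal{C}_\mathbf{h}}$ simultaneously near a nondegenerate singular point is genuinely subtle; the elliptic and focus-focus blocks can be handled by averaging over the associated compact group actions, while the hyperbolic blocks carry the real analytic difficulty and are the heart of the argument developed in \cite{MiNg}.
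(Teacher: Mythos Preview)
Your proposal is correct and follows essentially the same route as the paper: identify the $1$-cocycle condition $\partial_{\mathbf h}\alpha=0$ with the commutation relations $X_i(g_j)=X_j(g_i)$ for any lifts $g_i$ of $\alpha(e_i)$, then apply Theorem~\ref{theo:principal} (together with Theorem~\ref{thm:rank0uni} to reduce to the linear model) to produce $G$ with $[g_i]=[X_i(G)]$, i.e.\ $\alpha=\bar\ud_{\mathbf h}[G]$. Your added remarks on the diagram chase, the well-definedness modulo $\mathcal C_{\mathbf h}$, and the location of the analytic difficulty in the hyperbolic blocks are accurate elaborations of what the paper leaves implicit.
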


\section{Homotopy operators and a regular Poincaré lemma for foliated cohomology}\label{sec4homo}

Let us recall the following construction due to Guillemin and Sternberg \cite{geometricasymptotics} which generalizes, in a way \footnote{The proof contained in \cite{warner} makes a particular choice of retraction on star-shaped domains}, the classical proof of Poincaré lemma.

Consider $Y\subset M$ an embedded submanifold and let $\phi_t$ be a smooth retraction from $M$ to $Y$. Given any smooth  $k$-form $\alpha$, the following formula holds:
\begin{equation}
\alpha -\phi_0^*(\alpha)=\int_0^1 \dev{}{t}\phi_t^*(\alpha)=\int_0^1 \phi_t^*( \iota_{\xi_t}\ud\alpha) \ud t+ \ud\int_0^1 \phi_t^*( \iota_{\xi_t}\alpha) \ud t \ ,
\end{equation}where $\xi_t$ is the vector field associated to $\phi_t$.
Thus,  defining $I(\alpha)= \int_0^1 \phi_t^*( \iota_{\xi_t}\alpha) \ud t$,   we obtain,
\begin{equation}\label{eq:homotopy}
\alpha-\phi_0^*(\alpha)=I\circ\ud(\alpha)+ \ud\circ I(\alpha) \ .
\end{equation}

Now assume that $\alpha$ is a closed form, formula \ref{eq:homotopy} yields  $\alpha-\phi_0^*(\alpha)= \ud\circ I(\alpha)$, and therefore
$I(\alpha)$ is a primitive for the closed $k$-form $\alpha-\phi_0^*(\alpha)$.

This has been classically applied considering retractions to a point in contractible sets or to retractions to the base of a fiber bundle. In the context of Symplectic and Contact Geometry, this homotopy formula leads to the so-called Moser's path method \cite{Moser}.
As said before, formula \ref{eq:homotopy} does not, a priori, give a primitive for $\alpha$ but for the difference $ \alpha-\phi_0^*(\alpha)$\footnote{The vector field $\xi_t$ is the radial one when the retraction is $\phi_t(p_1,\dots,p_n)=(tp_1,\dots,tp_n)$, and this formula coincides with the one of Warner \cite{warner}, giving a primitive for $\alpha$.}.

This technique can also be applied for regular foliations. This approach using the general homotopy formula of Guillemin and Sternberg has the advantage that some choices on the retraction can be done in such a way that the vector field $\xi_t$ is tangent to special directions in $M$, thus, allowing an adaptation to the foliated cohomology case.

%################################################################

\subsection{Foliated cohomology}

Let $(M,\mathcal{F})$ be a foliated $m$-dimensional manifold and $n$ the dimension of the leaves. The (regular) foliation can be thought as a subbundle of $TM$, which is often denoted by $T\mathcal{F}$.

The foliated cohomology is the one associated to the following cochain complex:
\begin{equation}
0\longrightarrow C^\infty_\mathcal{F}(M)\hookrightarrow C^\infty(M)\stackrel{\ud_\mathcal{F}}{\longrightarrow}\Omega_\mathcal{F}^1(M)\stackrel{\ud_\mathcal{F}}{\longrightarrow}\cdots\stackrel{\ud_\mathcal{F}}{\longrightarrow}\Omega_\mathcal{F}^n(M)\stackrel{\ud_\mathcal{F}}{\longrightarrow}0 \ ,
\end{equation}where $\Omega_\mathcal{F}^k(M)=\Gamma(\wedge^kT\mathcal{F}^*)$, $C^\infty_\mathcal{F}(M)$ is the space of smooth functions which are constant along the leaves of the foliation, and $\ud_\mathcal{F}$ is the restriction of the exterior derivative, $\ud$, to $T\mathcal{F}$.\par

We can prove a Poincar\'{e} lemma for foliated cohomology, of a regular foliation, using equation \ref{eq:homotopy} by considering local coordinates in which the foliation is given by local equations $\ud p_{n+1}=0,\dots,\ud p_m=0$, and the retraction is given by $(tp_1,\dots ,tp_n,p_{n+1}\dots,p_m)$; the vector field $\xi_t$ is tangent to the relevant foliation.

\begin{teo}\label{poinca}
{\normalfont{[\textbf{Poincaré lemma for foliated cohomology}]}}
The foliated cohomology groups vanish for $\mathrm{degree}\geq 1$.
\end{teo}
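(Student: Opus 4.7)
The statement is local in nature, so I work in a foliated chart with coordinates $(p_1,\dots,p_m)$ in which the foliation is defined by $\ud p_{n+1}=0,\dots,\ud p_m=0$. In such coordinates a foliated $k$-form can be written as
\[
\alpha = \sum_{|I|=k} a_I(p_1,\dots,p_m)\,\ud p_{i_1}\wedge\cdots\wedge\ud p_{i_k}, \qquad 1\le i_j\le n,
\]
and $\ud_\mathcal{F}$ acts as the exterior derivative taken only in the leaf variables $p_1,\dots,p_n$.

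The plan is to transport the Guillemin-Sternberg homotopy formula (\ref{eq:homotopy}) into this setting using the retraction onto the transverse slice,
\[
\phi_t(p_1,\dots,p_m) = (tp_1,\dots,tp_n,\,p_{n+1},\dots,p_m),
\]
whose associated time-dependent vector field is $\xi_t = t^{-1}\sum_{i=1}^n p_i\,\partial_{p_i}$. Two features are crucial: (i) $\xi_t \in T\mathcal{F}$, so $\iota_{\xi_t}$ sends $\Omega_\mathcal{F}^k(M)$ to $\Omega_\mathcal{F}^{k-1}(M)$; (ii) $\phi_t$ preserves each leaf of the chart, so $\phi_t^*$ preserves the foliated complex. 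Consequently the operator
\[
I(\alpha) = \int_0^1 \phi_t^*\bigl(\iota_{\xi_t}\alpha\bigr)\,\ud t
\]
is well defined---the apparent pole at $t=0$ cancels because $\phi_t^*(\ud p_i)=t\,\ud p_i$ for $i\le n$---and it maps $\Omega_\mathcal{F}^k(M)$ into $\Omega_\mathcal{F}^{k-1}(M)$.

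Applying (\ref{eq:homotopy}) and projecting onto the bidegree $(k,0)$ part determined by the leaf/transverse splitting---equivalently, applying the classical formula leafwise with the transverse coordinates $p_{n+1},\dots,p_m$ treated as parameters---produces the identity
\[
\alpha - \phi_0^*(\alpha) = I(\ud_\mathcal{F}\alpha) + \ud_\mathcal{F}(I(\alpha)).
\]
For $k\ge 1$ one has $\phi_0^*\alpha=0$, since $\phi_0$ has vanishing differential in every leaf direction while $\alpha$ is built entirely from $\ud p_1,\dots,\ud p_n$. Hence whenever $\ud_\mathcal{F}\alpha=0$ we conclude $\alpha = \ud_\mathcal{F}(I(\alpha))$, which yields the desired vanishing of the foliated cohomology in positive degree.

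The point that I expect to require the most care is the bidegree bookkeeping in the previous display: a priori $\ud(I(\alpha))$ contains transverse components (derivatives with respect to $p_{n+1},\dots,p_m$ of the coefficients of $I(\alpha)$ wedged with $\ud p_i$ for $i>n$), but because $\iota_{\xi_t}$ and $\phi_t^*$ both respect the leaf/transverse bigrading, these components cancel exactly against the transverse components of $I(\ud\alpha)$, leaving the purely foliated identity above. Once this is observed, smoothness of $I(\alpha)$ in all variables follows from smooth parameter dependence of the integrand, so no further analytic estimates are needed.
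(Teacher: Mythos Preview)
Your argument is correct and follows exactly the approach sketched in the paper: you use the Guillemin--Sternberg homotopy formula with the leafwise retraction $\phi_t(p_1,\dots,p_m)=(tp_1,\dots,tp_n,p_{n+1},\dots,p_m)$ and observe that $\xi_t$ is tangent to $\mathcal{F}$, which is precisely what the paper proposes. The paper leaves the details implicit; your added remarks on the cancellation of the $t=0$ pole, the vanishing of $\phi_0^*\alpha$ in positive degree, and the leaf/transverse bigrading simply flesh out that same outline.
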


One could try to mimic similar formulae to prove a singular Poincaré lemma for a foliation given by an integrable system with nondegenerate singularities. The main issue of adapting such a proof is the smoothness of the procedure. Indeed, as we will see later, the adaptation of such a procedure is not possible since the cohomology groups do not vanish if the foliation is singular.

Whilst the de Rham complex is a fine resolution for the constant sheaf $\CR$ on $M$, the foliated cohomology is a fine resolution for the sheaf of smooth functions which are constant along the leaves of the foliation.\par

%################################################################

\subsection{Geometric Quantization à la Kostant}

A symplectic manifold $(M,\omega)$ such that the de Rham class $[\omega]$ is integral is called prequantizable. A prequantum line bundle of $(M,\omega)$ is a Hermitian line bundle over $M$ with connection, compatible with the Hermitian structure, $(L,\nabla^\omega)$ that satisfies $curv(\nabla^\omega)=-i\omega$ (the curvature of $\nabla^\omega$ is proportional to the symplectic form). And a real polarization $\mathcal{F}$ is an integrable subbundle of $TM$ (the bundle $T\mathcal{F}$) whose leaves are Lagrangian submanifolds, i.e., $\mathcal{F}$ is a Lagrangian foliation.

The restriction of the connection $\nabla^{\omega}$ to the polarization induces an operator
\begin{equation}
\nabla:\Gamma(L)\to\Gamma(T\mathcal{F}^*)\otimes\Gamma (L) \ .
\end{equation}

Let $\mathcal{J}$ denotes the space of local sections $s$ of a prequantum line bundle $L$ such that $\nabla s=0$. The space $\mathcal{J}$ has the structure of a sheaf and it is called the sheaf of flat sections.

The quantization of $(M,\omega,L,\nabla,\mathcal{F})$ is given by
\begin{equation}
\mathcal{Q}(M)=\displaystyle\bigoplus_{k\geq 0}\check{H}^k(M;\mathcal{J}) \ ,
\end{equation}where $\check{H}^k(M;\mathcal{J})$ are \v{C}ech cohomology groups with values in the sheaf $\mathcal{J}$.

If $\mathcal{S}$ denotes the sheaf of sections of the line bundle $L$, the Kostant complex is
\begin{equation}\label{eq72}
0\longrightarrow \mathcal{J}\hookrightarrow \mathcal{S}\stackrel{\ud^\nabla}{\longrightarrow}\Omega_\mathcal{F}^1\otimes\mathcal{S}\stackrel{\ud^\nabla}{\longrightarrow}\cdots\stackrel{\ud^\nabla}{\longrightarrow}\Omega_\mathcal{F}^n\otimes\mathcal{S}\stackrel{\ud^\nabla}{\longrightarrow}0 \ ,
\end{equation}where $\ud^\nabla(\alpha\otimes s)=\ud_\mathcal{F}(\alpha)\otimes s+(-1)^{\mathrm{degree}(\alpha)}\alpha\wedge\nabla s$ and $\ud^\nabla\circ\ud^\nabla=0$ because the curvature of $\nabla$ vanishes along the leaves.

\begin{lem}\label{locflat} There is always a local unitary flat section on each point of $M$.
\end{lem}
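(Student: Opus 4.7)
The plan is to exhibit the desired section by starting from an arbitrary local unitary section and then correcting it by a unit-modulus gauge factor. Around any point $p\in M$ the line bundle $L$ is trivial on some neighbourhood $U$, so there exists a nowhere-vanishing local section $s_0$; normalizing, $s_1 = s_0/\|s_0\|$ is a local unitary section, but in general $\nabla s_1 \neq 0$. Writing $\nabla s_1 = \theta\otimes s_1$ defines a complex-valued foliated $1$-form $\theta \in \Omega^1_\mathcal{F}(U)\otimes\CC$, and the question becomes whether we can multiply $s_1$ by a function of modulus $1$ so as to kill $\theta$.

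Two properties of $\theta$ are decisive. First, compatibility of the prequantum connection with the Hermitian metric, applied to the identity $\langle s_1,s_1\rangle \equiv 1$, yields $\theta + \bar\theta = 0$, so $\theta$ takes purely imaginary values. Second, because $\mathcal{F}$ is Lagrangian we have $\omega|_{T\mathcal{F}} = 0$, and the curvature identity $\mathrm{curv}(\nabla^\omega) = -i\omega$ then forces the induced operator $\nabla$ to be flat: $\ud^\nabla\circ\ud^\nabla = 0$ on $\Gamma(L)$. Applying this to $s_1$ gives $\ud_\mathcal{F}\theta\otimes s_1 - \theta\wedge\nabla s_1 = \ud_\mathcal{F}\theta\otimes s_1 = 0$ (the $\theta\wedge\theta$ term vanishes for degree reasons), so $\theta$ is $\ud_\mathcal{F}$-closed. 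By Theorem \ref{poinca} applied to the real foliated $1$-form $-i\theta$, after possibly shrinking $U$ we obtain a real smooth function $\tilde f$ with $\ud_\mathcal{F}\tilde f = -i\theta$; setting $f = i\tilde f$ provides a purely imaginary primitive of $\theta$.

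Finally, put $s = e^{-f}s_1$. Since $f$ is purely imaginary, $|e^{-f}| = 1$ and $s$ is still unitary; and
\begin{equation}
\nabla s = -\ud_\mathcal{F} f\cdot e^{-f}s_1 + e^{-f}\,\theta\otimes s_1 = e^{-f}(\theta - \ud_\mathcal{F} f)\otimes s_1 = 0,
\end{equation}
so $s$ is the required local unitary flat section. There is no genuine obstacle here beyond the foliated Poincar\'{e} lemma itself; the only step requiring care is the extraction of a purely imaginary primitive from a purely imaginary closed $1$-form, which is handled transparently by applying Theorem \ref{poinca} to $-i\theta$ as above.
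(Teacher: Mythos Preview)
Your proof is correct and follows essentially the same route as the paper: start from a local unitary section, write the connection form, show it is $\ud_\mathcal{F}$-closed via $\ud^\nabla\circ\ud^\nabla=0$, integrate using the foliated Poincar\'{e} lemma, and gauge away by $e^{-f}$. The one place where you are more careful than the paper is in tracking unitarity: you explicitly use compatibility of $\nabla^\omega$ with the Hermitian metric to conclude that $\theta$ is purely imaginary and then choose a purely imaginary primitive, so that $|e^{-f}|=1$ and the resulting section remains unitary; the paper's proof elides this point.
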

\begin{proof} Let $U\subset M$ be a trivializing neighbourhood of $L$ with a unitary section $s:U\subset M\to L$. Since $\nabla s\in \Omega_{\mathcal{F}|_U}^1(U)\otimes\Gamma(L|_U)$ there is a $\alpha\in \Omega_{\mathcal{F}|_U}^1(U)$ such that $\nabla s=\alpha\otimes s$. The condition $\ud^\nabla\circ\ud^\nabla=0$ implies $\ud_\mathcal{F}\alpha=0$;
\begin{eqnarray}
0=\ud^\nabla(\nabla s)&=&\ud^\nabla(\alpha\otimes s)=\ud_\mathcal{F}\alpha\otimes s-\alpha\wedge\nabla s \nonumber \\
&=&\ud_\mathcal{F}\alpha\otimes s-(\alpha\wedge\alpha)\otimes s=\ud_\mathcal{F}\alpha\otimes s \ .
\end{eqnarray} By the Poincaré lemma for foliations (theorem \ref{poinca}) there exists a neighbourhood $V\subset U$ and $f\in C^\infty(V)$ such that $\ud_\mathcal{F}f=\alpha|_V$. Setting $r=\e^{-f}s|_V$,
\begin{equation}
\nabla r=\e^{-f}\nabla s|_V+\ud_\mathcal{F}(\e^{-f})\otimes s|_V=\e^{-f}(\alpha\otimes s)\big{|}_V-\e^{-f}\ud_\mathcal{F}f\otimes s|_V=0 \ ,
\end{equation}
so $r$ is a unitary flat section of $L|_V$.
\end{proof}		

Wherefore, for each point on $M$ there exists a trivializing neighbourhood $V\subset M$ of $L$ with a unitary flat section $s:V\subset M\to L$, and any element of $\Omega_\mathcal{F}^k(M)\otimes\Gamma(L)$ can be locally written as $\alpha\otimes s$, where $\alpha\in\Omega^k_{\mathcal{F}|_V}(V)$. The condition $\ud^\nabla(\alpha\otimes s)=0$ is, then, equivalent to $\ud_\mathcal{F}\alpha=0$, because $\ud^\nabla(\alpha\otimes s)=\ud_\mathcal{F}\alpha\otimes s+(-1)^k\alpha\wedge\nabla s$, $s\neq 0$ and $\nabla s=0$.

The Kostant complex is just the foliated complex twisted by the sheaf of sections $\mathcal{S}$, and exactness of the foliated complex implies exactness of the Kostant complex.

\begin{teo}\label{fineresolution} The Kostant complex is a fine resolution for $\mathcal{J}$. Therefore, its cohomology groups are isomorphic to the cohomology groups with coefficients in the sheaf of flat sections $\check{H}^k(M;\mathcal{J})$ and thus compute Geometric Quantization.
\end{teo}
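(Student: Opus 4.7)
The plan is to establish two things: that the Kostant complex \eqref{eq72} is an exact sequence of sheaves (i.e.\ genuinely a resolution of $\mathcal{J}$), and that each of the sheaves appearing in positive degree is fine. Once both are in place, the conclusion about $\check{H}^k(M;\mathcal{J})$ follows from the standard homological-algebra fact that cohomology computed from any fine resolution of a sheaf agrees with its sheaf cohomology, and that for paracompact $M$ sheaf cohomology coincides with \v{C}ech cohomology.

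For exactness, the work has essentially already been done. Around any point of $M$, Lemma \ref{locflat} furnishes an open neighbourhood $V$ together with a unitary flat section $s\in\Gamma(L|_V)$. Since $s$ is nowhere zero on $V$, every local section of $\Omega_\mathcal{F}^k\otimes\mathcal{S}$ over $V$ can be written uniquely as $\alpha\otimes s$ with $\alpha\in\Omega^k_{\mathcal{F}|_V}(V)$, and because $\nabla s=0$ the Kostant differential simplifies to
\begin{equation}
\ud^\nabla(\alpha\otimes s)=\ud_\mathcal{F}\alpha\otimes s+(-1)^{\mathrm{degree}(\alpha)}\alpha\wedge\nabla s=\ud_\mathcal{F}\alpha\otimes s \ .
\end{equation}
Under this identification the Kostant complex on $V$ is isomorphic to the foliated de Rham complex tensored with the trivial line $\CR\cdot s$. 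In degree zero the kernel of $\ud^\nabla$ is $\mathcal{J}$ by definition; in positive degrees the exactness at the level of stalks reduces directly to the regular Poincar\'{e} lemma for foliated cohomology (Theorem \ref{poinca}). This proves that \eqref{eq72} is a resolution of $\mathcal{J}$.

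For fineness, each sheaf $\Omega_\mathcal{F}^k\otimes\mathcal{S}$ is a module over the sheaf of smooth functions $C^\infty_M$, and since $M$ is a (paracompact) manifold it admits smooth partitions of unity subordinate to any open cover. Multiplication by such a partition of unity provides the required endomorphisms with supports in the cover elements, so every $\Omega_\mathcal{F}^k\otimes\mathcal{S}$ is fine in the standard sense. The main technical point is really the first one: checking that $\ud^\nabla$ truly reduces to $\ud_\mathcal{F}$ after trivializing by a flat unitary section, so that Theorem \ref{poinca} can be invoked unchanged. The fineness step is routine.

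Putting the two pieces together, the cohomology of the complex of global sections of \eqref{eq72} computes the sheaf cohomology $H^k(M;\mathcal{J})$, and paracompactness identifies this with $\check{H}^k(M;\mathcal{J})$. By the definition of Kostant's geometric quantization, this yields $\mathcal{Q}(M)=\bigoplus_{k\geq 0}H^k(\Omega_\mathcal{F}^\bullet(M)\otimes\Gamma(L),\ud^\nabla)$, completing the proof.
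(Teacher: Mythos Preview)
Your proof is correct and follows exactly the approach the paper sketches: the paragraph preceding the theorem already reduces local exactness to Theorem \ref{poinca} via Lemma \ref{locflat}, and the paper then simply cites Rawnsley \cite{JHR} rather than writing out the fineness and abstract-de-Rham arguments. Your write-up fills in those routine details (fineness via $C^\infty_M$-module structure and partitions of unity, then the standard fine-resolution theorem) that the paper leaves implicit.
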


Rawnsley provided a proof of this fact in \cite{JHR}.

%#######################################################################################################################

\section{ The singular case}\label{sing}

The main objective of this section is to use the Poincar\'{e} lemma of the deformation complex to compute foliated cohomology.
We start this section by recalling a definition of foliated cohomology that is going to be used in the singular case. We then introduce some analytical tools that we need to compute these groups. These analytical tools are mainly a series of decomposition results for functions with respect to vector fields.
Finally, in the last subsection we enclose explicit computations of the cohomology groups.\par

Roughly, elements of these cohomology groups are given by a collection of functions wich are constant along the leaves of the foliation, fulfilling additional constraints. 

%################################################################

\subsection{Singular foliated cohomology}

%########################################

Integrable systems defined on $(M,\omega)$ induce Lie subalgebras of $(\Gamma(TM),[\cdot,\cdot])$, namely

$(\mathcal{F}=\langle X_1,\dots,X_n \rangle_{C^\infty(M)}, [\cdot,\cdot]\big{|}_{\mathcal{F}})$, where $X_i$ is the Hamiltonian vector field of the $i$th component of a moment map $F:M\to\CR^n$. \par

Now, considering $C^\infty(M)$ as a $C^\infty(M)$-module, $(\mathcal{F}, [\cdot,\cdot]\big{|}_{\mathcal{F}})$ can be represented on $C^\infty(M)$ as vector fields acting on smooth functions.

This is an example of a Lie pseudo algebra representation (see \cite{Mkz} for precise definitions and a nice account for the history and, various, names of this structure) and one can, then, consider the following complex\footnote{The Lie pseudo algebra cohomology with respect to that particular representation.}:
\begin{equation}
0\longrightarrow C^\infty_\mathcal{F}(M)\hookrightarrow C^\infty(M)\stackrel{\ud_\mathcal{F}}{\longrightarrow}\Omega_\mathcal{F}^1(M)\stackrel{\ud_\mathcal{F}}{\longrightarrow}\cdots\stackrel{\ud_\mathcal{F}}{\longrightarrow}\Omega_\mathcal{F}^n(M)\stackrel{\ud_\mathcal{F}}{\longrightarrow}0 \ ,
\end{equation}With the differential defined by
\begin{eqnarray}
\ud_{\mathcal{F}}\alpha(Y_1,\dots,Y_{k+1})&=&\somatorio{i=1}{k+1}(-1)^{i+1}Y_i(\alpha(Y_1,\dots,\hat{Y_i},\dots,Y_{k+1}))  \nonumber \\
&&+\somatorio{i<j}{}(-1)^{i+j}\alpha([Y_i,Y_j],Y_1,\dots,\hat{Y_i},\dots,\hat{Y_j},\dots,Y_{k+1}) \ ,
\end{eqnarray}with $Y_1,\dots,Y_{k+1}\in\mathcal{F}$. The cochain spaces are defined by
\begin{equation}
\Omega_\mathcal{F}^k(M)=\mathrm{Hom}_{C^\infty(M)}(\wedge^k_{C^\infty(M)}\mathcal{F};C^\infty(M)) \ ,
\end{equation}and $C^\infty_\mathcal{F}(M)=\mathrm{ker}(\ud_{\mathcal{F}}:C^\infty(M)\to\Omega_\mathcal{F}^1(M))$.\par

The differential is a coboundary operator and the associated cohomology is denoted by $H^{\ \bullet}_{\mathcal{F}}(M)$.\par

\begin{rema}
This construction is also well defined in the analytic category and that is the notion used in theorem \ref{cohomologycomputed}.
\end{rema}

From now on $(M,\omega)$ will be a symplectic manifold near a rank zero nondegenerate singularity of Williamson type $(k_e,k_h,0)$. Thus $(\CR^{2n},\somatorio{i=1}{n}\ud x_i\wedge\ud y_i)$ is endowed with a distribution $\mathcal{F}$ generated by a Williamson basis.\par

\begin{defi}The vanishing set of a vector field of a Williamson basis $X_i$ is denoted by $\Sigma_i=\{p\in\CR^{2n} \ ; \ x_i(p)=y_i(p)=0\}$.
\end{defi}

\begin{prop}\label{welldefineform}If $\alpha\in\Omega_\mathcal{F}^k(\CR^{2n})$ then $\alpha(X_{j_1},\dots,X_{j_k})\big{|}_{\Sigma_{j_1}\cup\cdots\cup\Sigma_{j_k}}=0$.
\end{prop}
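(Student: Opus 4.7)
My plan is to combine a sharp description of how the Williamson fields $X_i$ vanish on $\Sigma_i$ with the $C^\infty(M)$-multilinearity of $\alpha$.

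Since we are in Williamson type $(k_e,k_h,0)$, no focus-focus components occur, and the local formulae for the Hamiltonian vector fields recalled in Section 2 read
\begin{equation*}
X_i=-2y_i\partial_{x_i}+2x_i\partial_{y_i}\quad\text{(elliptic)},\qquad X_i=-x_i\partial_{x_i}+y_i\partial_{y_i}\quad\text{(hyperbolic)}.
\end{equation*}
In both cases the zero set of $X_i$ is exactly $\Sigma_i=\{x_i=y_i=0\}$, and the expressions themselves already exhibit the decomposition $X_i=x_i\,\widetilde{A}_i+y_i\,\widetilde{B}_i$ for smooth (ambient) vector fields $\widetilde{A}_i,\widetilde{B}_i$ on $\CR^{2n}$ (e.g.\ $\widetilde{A}_i=2\partial_{y_i}$ and $\widetilde{B}_i=-2\partial_{x_i}$ in the elliptic case).

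Fix $p\in\Sigma_{j_s}$ for some $s\in\{1,\dots,k\}$. Substituting the decomposition of $X_{j_s}$ into the $s$-th slot and using the $C^\infty(M)$-linearity of $\alpha$ in that slot yields
\begin{equation*}
\alpha(X_{j_1},\dots,X_{j_s},\dots,X_{j_k})=x_{j_s}\,\alpha(\dots,\widetilde{A}_{j_s},\dots)+y_{j_s}\,\alpha(\dots,\widetilde{B}_{j_s},\dots),
\end{equation*}
and evaluating at $p$ forces the right-hand side to vanish, since $x_{j_s}(p)=y_{j_s}(p)=0$. Running this for any index $s$ with $p\in\Sigma_{j_s}$ proves the claim.

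The genuinely delicate point, and the one I expect to take the most care, is legitimizing this substitution when $\widetilde{A}_{j_s},\widetilde{B}_{j_s}$ do not themselves belong to $\mathcal{F}$. This rests on the tensorial character of $\alpha$: a $C^\infty(M)$-multilinear alternating map is pointwise, in the sense that $\alpha(V_1,\dots,V_k)(p)$ depends only on the values $V_1(p),\dots,V_k(p)$; in particular whenever $V_s(p)=0$ one has $\alpha(V_1,\dots,V_k)(p)=0$. Applied to $V_s=X_{j_s}$ with $p\in\Sigma_{j_s}$, this is precisely what the decomposition is making explicit. Once this tensoriality is in place the proposition is immediate.
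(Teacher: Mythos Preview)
Your proof is correct and takes essentially the same approach as the paper: both rest on the pointwise (tensorial) character of $\alpha$, concluding from $X_{j_s}(p)=0$ for $p\in\Sigma_{j_s}$ that $\alpha(X_{j_1},\dots,X_{j_k})(p)=0$. Your explicit decomposition $X_i=x_i\widetilde A_i+y_i\widetilde B_i$ is a detour you rightly recognize cannot be inserted directly (since $\widetilde A_i,\widetilde B_i\notin\mathcal{F}$), after which your fallback to tensoriality is exactly the paper's one-line argument that $\alpha$ reduces at each $p$ to an element of $\wedge^k(\mathcal{F}|_p)^*$.
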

\begin{proof}At every point $p\in\CR^{2n}$ the map $\alpha\in\Omega_\mathcal{F}^k(\CR^{2n})$ reduces to an element of $\wedge^k{\mathcal{F}\big{|}_p}^{*}$. Since $X_i=0$ at $\Sigma_i$, for any $p\in\Sigma_i$ and vectors $Y_1(p),\dots,Y_{k-1}(p)\in\mathcal{F}\big{|}_p$, the following expression holds:
\begin{equation}
\alpha_p(X_i(p),Y_1(p),\dots,Y_{k-1}(p))=0 \ .
\end{equation}Therefore $\alpha(X_{j_1},\dots,X_{j_k})\big{|}_{\Sigma_i}=0$ for $i=j_1,\dots,j_k$.
\end{proof}

%################################################################

\subsection{Analytical tools: Special decomposition of smooth functions}

Here we present special decompositions for functions with respect to vector fields of a Williamson basis. In order to fix notation, we recall what we mean by a \emph{flat function at a subset},

\begin{defi}Consider $\CR^m$ endowed with coordinates $(p_1,\dots,p_m)$. A smooth function $g\in C^\infty(\CR^m)$ is said to be Taylor flat at the subset $\{p_1=\cdots=p_k=0\}$ when
\begin{equation}
\fraction{\partial^{j_1+\cdots +j_k}g}{\partial p_1^{j_1}\cdots\partial p_k^{j_k}}\Bigg{|}_{\{p_1=\cdots=p_k=0\}}=0 \ ,
\end{equation}for all $j_1,\dots,j_k$ and some fixed $k\leq m$.
\end{defi}

\begin{rema}
It is important to point out that if $g\in C^\omega(\CR^m)$ is Taylor flat at $\{p_1=\cdots=p_k=0\}$, then it is the zero function.
\end{rema}

\begin{defi}Given integer numbers $k_e$ and $k_h$, we will call a smooth function $f\in C^\infty(\CR^{2n})$ complanate if it can be written as
\begin{equation}
f=\somatorio{i=k_e+1}{k_e+k_h}T_i \ ,
\end{equation}where each $T_i$ is a Taylor flat function at $\Sigma_i$. A noncomplanate function is one for which such an expression cannot be found.
\end{defi}

We can find special decompositions for smooth functions like $f=f_i+X_i(F_i)$. The following result is a summary of results contained in \cite{Mi} and \cite{MiNg},
\begin{lem}\label{Evalemma}Assume that the origin is a singularity of Williamson type $(k_e,k_h,0)$, then for any $f\in C^\infty(\CR^{2n})$ there exist $f_i,F_i\in C^\infty(\CR^{2n})$ such that, for each vector field $X_i$ in a Williamson basis, $f=f_i+X_i(F_i)$. Moreover,
\begin{enumerate}
  \item $X_i(f_i)=0$;
  \item $f_i$ is uniquely defined if $X_i$ defines an $S^1$-action, if not $f_i$ is uniquely defined up to Taylor flat functions at $\Sigma_i$;
  \item one can choose $f_i$ and $F_i$ such that $X_j(f_i)=X_j(F_i)=0$ whenever $X_j(f)=0$ for $j\neq i$;
	\item if $f$ vanishes at the zero set of any vector of a Williamson basis, so does the function $f_i$ and one can choose $F_i$ vanishing at the zero set, as well;
	\item $X_i(f)=0$ implies that $f$ depends on $x_i$ and $y_i$ via $h_i$:
\begin{equation*}
f(x_1,y_1,\dots,x_n,y_n)=\tilde{f}(x_1,y_1,\dots,x_i^2+y_i^2,\dots,x_n,y_n)\ ,
\end{equation*}
\begin{equation*}
f(x_1,y_1,\dots,x_n,y_n)\big{|}_{Q_i^j}=\tilde{f}(x_1,y_1,\dots,x_iy_i,\dots,x_n,y_n)\ ,
\end{equation*}where $Q_i^1=\{x_i>0,y_i>0\}$, $Q_i^2=\{x_i>0,y_i<0\}$, $Q_i^3=\{x_i<0,y_i>0\}$ and $Q_i^4=\{x_i<0,y_i<0\}$.
\end{enumerate}
\end{lem}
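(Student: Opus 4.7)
The plan is to treat each vector field $X_i$ of the Williamson basis separately and reduce to a two-dimensional problem in the coordinates $(x_i,y_i)$: in Williamson type $(k_e,k_h,0)$ each $X_i$ acts only on its own pair $(x_i,y_i)$, and the remaining $2n-2$ coordinates merely play the role of parameters that I would carry along throughout. The two cases to handle are the elliptic one ($1\le i\le k_e$) and the hyperbolic one ($k_e+1\le i\le k_e+k_h$).

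For the elliptic case I would introduce polar coordinates $x_i=r\cos\theta$, $y_i=r\sin\theta$ on the $(x_i,y_i)$-plane, so that $X_i=2\,\partial_\theta$. The natural candidates are then $f_i$ defined as the $\theta$-average of $f$, and $F_i$ defined as the zero-mean $\theta$-primitive of $\tfrac12(f-f_i)$. Smoothness at $r=0$ reduces to the classical fact that a smooth function of $(x_i,y_i)$ has Fourier coefficients in $\theta$ that are divisible by $r^{|k|}$ with the remaining factor smooth in $r^2$. Uniqueness of $f_i$ in item (2) is immediate from this averaging, and $X_i(f)=0$ forces $\theta$-independence, so $f=\tilde f(x_i^2+y_i^2)$, which proves item (5) in the elliptic case.

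The hyperbolic case is the main obstacle. Here $X_i=-x_i\partial_{x_i}+y_i\partial_{y_i}$ vanishes on $\Sigma_i$ and has four open invariant quadrants $Q_i^j$ on which it is non-vanishing. My plan follows the strategy of \cite{Mi,MiNg}: expand $f$ formally as $\sum_{a,b}c_{ab}\,x_i^ay_i^b$ (with coefficients depending on the remaining parameters), and use the identity $X_i(x_i^ay_i^b)=(b-a)\,x_i^ay_i^b$, so that the only monomial obstruction to solving $X_i(F_i)=f-f_i$ is the diagonal $a=b$, i.e.\ the powers of $h_i=x_iy_i$. Collecting the diagonal monomials into a formal series and realising it as an honest smooth function via Borel's theorem produces a candidate $f_i$; the same Borel procedure applied to the off-diagonal series yields a formal primitive $F_i$, and the discrepancy between the exact $f-f_i$ and $X_i(F_i)$ is Taylor-flat along $\Sigma_i$. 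One then has to show that any Taylor-flat function at $\Sigma_i$ can be written as $X_i$ of another Taylor-flat function, which is a division argument on each quadrant together with a patching across $\Sigma_i$ using the flatness. This last step is both the analytic core of the lemma and the precise source of the flat-function ambiguity in item (2); item (5) in the hyperbolic case follows from the same monomial analysis restricted to each $Q_i^j$.

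For the compatibility properties (3) and (4), the key observation is that all of the operators above (averaging over $\theta$, antidifferentiation in $\theta$, Borel summation of the diagonal and off-diagonal Taylor coefficients) act only in the variables $(x_i,y_i)$, so they commute with any $X_j$ for $j\ne i$ and preserve pointwise vanishing at $\Sigma_j$; inspecting the formulas at $\Sigma_i$ itself closes the remaining part of (4). The hardest and most delicate step throughout is the smooth primitive construction for the hyperbolic vector field across $\Sigma_i$, and that is where I expect the real technical work to lie.
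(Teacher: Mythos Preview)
The paper does not actually prove this lemma: immediately after the statement it simply records that the elliptic case was proved in \cite{eli1,eli2,Mi} and the hyperbolic case in \cite{Mi}, and adds a remark on how the arguments adapt to the analytic category. So there is no in-paper proof to compare against; the lemma is quoted as a summary of results from those references.

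That said, your outline matches the strategy actually used in those sources. In the elliptic case the decomposition is obtained by averaging along the periodic flow of $X_i$ (your $\theta$-average), which is exactly Eliasson's construction and gives the uniqueness in item (2) and the $h_i$-dependence in item (5). In the hyperbolic case the argument in \cite{Mi,MiNg} is precisely the one you sketch: solve $X_i(F_i)=f-f_i$ first at the level of formal Taylor series at $\Sigma_i$ (the diagonal $a=b$ terms are the obstruction and assemble into $f_i$), realise the formal objects by Borel, and then handle the Taylor-flat remainder by an explicit integration along the flow on each quadrant with a cutoff, the flatness being what makes the result extend smoothly across $\Sigma_i$. Your identification of this last step as the delicate one, and as the origin of the flat ambiguity in item (2), is correct. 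Items (3) and (4) are indeed obtained, as you say, from the fact that all constructions act only in the $(x_i,y_i)$ variables and hence commute with the remaining $X_j$ and preserve vanishing on the $\Sigma_j$. So your proposal is in line with the cited proofs; just be aware that in this paper the lemma is stated and referenced rather than reproved.
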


The case when $X_i$ is an elliptic vector field was proved in \cite{eli1,eli2,Mi}; \cite{Mi} also has a proof when $X_i$ is a hyperbolic vector field.
\begin{rema}
The proofs contained in \cite{Mi} can be adapted for the analytic category: for hyperbolic singularities the formal proof yields the corresponding analytic statement, whilst the integrals defining the elliptic decomposition entail the analyticity of the construction. This is the version of the lemma used in the proof of theorem \ref{cohomologycomputed}. Furthermore, the uniqueness of the decomposition holds for both types of singularities, since there are no flat functions (apart from the zero function) in the analytic category.
\end{rema}

%########################################

\subsection{Computation of foliated cohomology groups}

We will distinguish between the smooth and the analytic category.

In the smooth case we can completely determine the cohomology groups in degree $1$ and $n$ for Williamson type $(k_e,k_h,0)$, and in all degrees for Williamson type $(k_e,0,0)$. In the analytic case the computations are done in all degrees.

\begin{teo}\label{smoothcohomologycomputed1}
{\normalfont{[\textbf{Degree 1 smooth case}]}} Consider $(\CR^{2n},\somatorio{i=1}{n}\ud x_i\wedge\ud y_i)$ endowed with a smooth distribution $\mathcal{F}$ generated by a Williamson basis of type $(k_e,k_h,0)$, then the following decomposition holds:
\begin{equation}
\mathrm{ker}(\ud_\mathcal{F}:\Omega_\mathcal{F}^1(\CR^{2n})\to\Omega_\mathcal{F}^2(\CR^{2n}))=W_\mathcal{F}^1(\CR^{2n})\oplus\ud_\mathcal{F}(C^\infty(\CR^{2n})) \ ,
\end{equation}where $W_\mathcal{F}^1(\CR^{2n})$ is the set of $1$-forms $\beta\in\Omega_\mathcal{F}^1(\CR^{2n})$ such that $\pounds_{X_i}(\beta)=0$ for all $i$, and if $X_i$ is of hyperbolic type $\beta(X_i)$ is not Taylor flat at $\Sigma_i$ (when it is nonzero).

Thus, the foliated cohomology group in degree $1$ is given by:
\begin{eqnarray*}
H_\mathcal{F}^1(\CR^{2n})&\cong&\displaystyle\bigoplus_{i=1}^{k_e}\{f_i\in C_\mathcal{F}^\infty(\CR^{2n}) \ ; \  f\big{|}_{\Sigma_i}=0 \} \\
&&\displaystyle\bigoplus_{i=k_e+1}^{n}\{f_i\in C_\mathcal{F}^\infty(\CR^{2n}) \ ; \ f=0 \ or \ f\big{|}_{\Sigma_i}=0 \ \text{and not Taylor flat at $\Sigma_i$} \}
\end{eqnarray*}
\end{teo}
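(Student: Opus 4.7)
Evaluation on the Williamson generators translates a foliated $1$-form $\beta\in\Omega^1_\mathcal{F}(\CR^{2n})$ into the tuple $(g_1,\dots,g_n)$ with $g_i=\beta(X_i)$; by Proposition \ref{welldefineform}, $g_i|_{\Sigma_i}=0$. Since $[X_i,X_j]=0$, one has $\ud_\mathcal{F}\beta(X_i,X_j)=X_i(g_j)-X_j(g_i)$, so closedness is equivalent to the commutation system $X_i(g_j)=X_j(g_i)$, while exactness $\beta=\ud_\mathcal{F} f$ reads $g_i=X_i(f)$.

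\textbf{Existence of the decomposition.} The commutation relations are exactly the hypothesis of Theorem \ref{theo:principal}, which furnishes a single primitive $G\in C^\infty(\CR^{2n})$ together with basic functions $f_1,\dots,f_n\in C^\infty_\mathcal{F}(\CR^{2n})$ satisfying $g_i=f_i+X_i(G)$. The assignment $X_i\mapsto f_i$ extends $C^\infty$-linearly to a well-defined $\tilde{\beta}\in\Omega^1_\mathcal{F}(\CR^{2n})$: any relation $\sum_i a_iX_i=0$ among the generators forces $\sum_i a_ig_i=\beta(0)=0$ and $\sum_i a_iX_i(G)=\bigl(\sum_i a_iX_i\bigr)(G)=0$, hence $\sum_i a_if_i=0$. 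By construction $\pounds_{X_j}\tilde{\beta}=0$, and $f_i|_{\Sigma_i}=0$ follows from $g_i|_{\Sigma_i}=0$ together with $X_i|_{\Sigma_i}=0$. For hyperbolic $i$, Lemma \ref{Evalemma}(2) permits adjusting $f_i$ by an $X_i$-invariant Taylor flat function: if the $f_i$ produced by Theorem \ref{theo:principal} is itself Taylor flat and nonzero, a direct integration in the characteristic coordinates $\sigma=\log|x_i|$, $s=x_iy_i$ (in which $X_i=-\partial_\sigma$) exhibits it as $X_i$ of a smooth function, the Taylor flatness in $s$ killing the logarithmic singularity in $\sigma$, and this correction is reabsorbed into $G$. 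After this normalization $\tilde{\beta}\in W^1_\mathcal{F}(\CR^{2n})$ and $\beta=\tilde{\beta}+\ud_\mathcal{F} G$.

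\textbf{Triviality of the intersection and identification of $H^1_\mathcal{F}$.} If $\tilde{\beta}\in W^1_\mathcal{F}$ is also exact, $\tilde{\beta}=\ud_\mathcal{F} h$, then $X_i(h)$ is $X_i$-invariant for each $i$. When $X_i$ is elliptic, $X_i=2\partial_\theta$ in polar coordinates on $(x_i,y_i)$, and $\partial_\theta$-invariance of $\partial_\theta h$ together with single-valuedness in $\theta$ and Lemma \ref{Evalemma}(5) forces $X_i(h)=0$. When $X_i$ is hyperbolic, the analogous computation in the coordinates $(\sigma,s)$ shows that smoothness of $h$ across $\Sigma_i$ forces $X_i(h)$ to be Taylor flat at $\Sigma_i$, which by the defining non-flatness clause of $W^1_\mathcal{F}$ yields $X_i(h)=0$. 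Hence $\tilde{\beta}=0$, the direct sum is established, and evaluation $\beta\mapsto(\beta(X_1),\dots,\beta(X_n))$ identifies $H^1_\mathcal{F}(\CR^{2n})\cong W^1_\mathcal{F}(\CR^{2n})$ with the direct sum displayed in the statement: the $i$-th factor consists of basic functions vanishing on $\Sigma_i$, further restricted to be either zero or non-Taylor-flat at $\Sigma_i$ when $X_i$ is hyperbolic.

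\textbf{Where the difficulty is.} The delicate part, in my view, is the Taylor-flat bookkeeping around the hyperbolic fixed-point loci. Both the reabsorption of a flat $f_i$ into $G$ in the existence argument and the ``invariance forces flatness'' step in the uniqueness argument rely on a sharp description of the range of $X_i$ acting on smooth functions across $\Sigma_i$, and one must also check that the adjustments performed for one hyperbolic index do not destroy the $X_j$-invariance of the remaining components. It is at this point that the explicit Williamson model, rather than any abstract cohomological device, really drives the proof.
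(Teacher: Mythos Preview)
Your proof follows the same route as the paper's: invoke Theorem \ref{theo:principal} on the tuple $(\alpha(X_i))_i$ to produce an invariant representative $\beta$, then use the uniqueness clause of Lemma \ref{Evalemma} to decide when such an invariant $\beta$ is exact. The explicit $(\sigma,s)$-coordinate computations you give for the hyperbolic flatness analysis are just a hands-on version of what the paper extracts directly from Lemma \ref{Evalemma}(2), and the cross-index concern you flag at the end is exactly what item (3) of that lemma is designed to absorb.
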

\begin{proof}For any $\alpha\in\Omega_\mathcal{F}^1(\CR^{2n})$ the condition $\ud_\mathcal{F}\alpha=0$ implies
\begin{equation}
\ud_\mathcal{F}\alpha(X_i,X_j)=X_i(\alpha(X_j))-X_j(\alpha(X_i))=0 \ ,
\end{equation}and theorem \ref{theo:principal} says that $\alpha(X_i)=f_i+X_i(F)$, where $F\in C^\infty(\CR^{2n})$ and $f_i\in C^\infty_\mathcal{F}(\CR^{2n})$. Thus any closed foliated $1$-form $\alpha$ is cohomologous to a foliated $1$-form $\beta$ satisfying $\pounds_{X_i}(\beta)=0$ for all $i$ (proposition \ref{welldefineform} and item 4 of lemma \ref{Evalemma} guarantee that the forms are well defined); the condition $\pounds_{X_i}(\beta)=0$ automatically implies that $\beta$ is closed.\par

There exists $g\in C^\infty(\CR^{2n})$ such that $\ud_\mathcal{F}g=\beta$ if and only if $\beta(X_i)=X_i(g)$. Since $\pounds_{X_i}(\beta)=0$, this implies $X_i(\beta(X_i))=0$
and by uniqueness (up to Taylor flat functions, lemma \ref{Evalemma}) $0=\beta(X_i)+X_i(-g)$ has a solution if and only if $\beta(X_i)=0$ or $\beta(X_i)$ is Taylor flat at $\Sigma_i$ (for $i=k_e+1,\dots,n$). Wherefore, $\beta$ is exact if and only if $\beta=0$ or, if $\beta(X_i)\neq 0$ (for $i=k_e+1,\dots,n$), $\beta(X_i)$ is Taylor flat at $\Sigma_i$.\par

The expression $\mathrm{ker}=W_\mathcal{F}^1(\CR^{2n})\oplus\ud_\mathcal{F}(C^\infty(\CR^{2n}))$ implies $H_\mathcal{F}^1(\CR^{2n})=W_\mathcal{F}^1(\CR^{2n})$, by definition any $\beta\in W_\mathcal{F}^1(\CR^{2n})$ can be given by $n$ functions vanishing at certain points (proposition \ref{welldefineform}) and satisfying some Taylor flat condition, e.g.: $\beta(X_n)=f\in C^\infty(\CR^{2n})$, $f\big{|}_{\Sigma_i}=0$ and not Taylor flat at $\Sigma_n$, if it is nonzero. The Lie derivative condition yields $f\in C^\infty_\mathcal{F}(\CR^{2n})$.
\end{proof}

We now consider the case of top degree forms in the smooth category,
\begin{teo}\label{smoothcohomologycomputedn}
{\normalfont{[\textbf{Top degree smooth case}]}} Consider $(\CR^{2n},\somatorio{i=1}{n}\ud x_i\wedge\ud y_i)$ endowed with a smooth distribution $\mathcal{F}$ generated by a Williamson basis of type $(k_e,k_h,0)$, then the following decomposition holds:
\begin{equation}
\Omega_\mathcal{F}^n(\CR^{2n})=W_\mathcal{F}^n(\CR^{2n})\oplus\ud_\mathcal{F}(\Omega_\mathcal{F}^{n-1}(\CR^{2n})) \ ,
\end{equation}where $W_\mathcal{F}^n(\CR^{2n})$ is the set of $n$-forms $\beta\in\Omega_\mathcal{F}^n(\CR^{2n})$ such that $\pounds_{X_i}(\beta)=0$ for all $i$, and if $\beta(X_1,\dots,X_n)\neq 0$, it is noncomplanate.

Thus, the foliated cohomology group in degree $n$ is given by:
\begin{eqnarray*}
H_\mathcal{F}^n(\CR^{2n})&\cong&\displaystyle\{f\in C_\mathcal{F}^\infty(\CR^{2n}) \\
&;& f\big{|}_{\Sigma_1\cup\cdots\cup\Sigma_n}=0 \ \text{and $f$ is noncomplanate or zero}\}
\end{eqnarray*}
\end{teo}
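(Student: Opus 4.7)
Since $\mathcal{F}$ has rank $n$, the foliated complex terminates at degree $n$ and every top-degree form is automatically closed, so $H_\mathcal{F}^n(\CR^{2n}) = \Omega_\mathcal{F}^n(\CR^{2n})/\ud_\mathcal{F}(\Omega_\mathcal{F}^{n-1}(\CR^{2n}))$. An $n$-form $\beta \in \Omega_\mathcal{F}^n(\CR^{2n})$ is determined by the single function $f:=\beta(X_1,\dots,X_n)$, which by proposition \ref{welldefineform} vanishes on $\Sigma_1\cup\cdots\cup\Sigma_n$, and the commutativity of the $X_i$ gives
\begin{equation*}
\ud_\mathcal{F}\gamma(X_1,\dots,X_n)=\sum_{i=1}^n(-1)^{i+1}X_i(g_i),\qquad g_i:=\gamma(X_1,\dots,\widehat{X_i},\dots,X_n).
\end{equation*}
The claim thus reduces to a statement about which $\mathcal{F}$-invariant functions lie in the image of the operator $(g_1,\dots,g_n)\mapsto \sum_i X_i(g_i)$.

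\textbf{Step 1 (existence of a $W_\mathcal{F}^n$-representative).} Given $\beta$, I apply lemma \ref{Evalemma} iteratively to $f$: first write $f = f^{(1)} + X_1(F_1)$ with $X_1(f^{(1)})=0$; then apply the lemma to $f^{(1)}$ with $X_2$, invoking item 3 to preserve the $X_1$-invariance, producing $f^{(1)} = f^{(2)} + X_2(F_2)$ with $X_i(f^{(2)})=X_i(F_2)=0$ for $i\leq 2$; continue through $X_n$. After $n$ steps, $f = f^{(n)} + \sum_i X_i(F_i)$ with $X_j(f^{(n)})=0$ for every $j$, and item 4 forces each $F_i$ to vanish on $\Sigma_1\cup\cdots\cup\Sigma_n$, which is enough for the $(n-1)$-form $\gamma$ defined by $\gamma(X_1,\dots,\widehat{X_i},\dots,X_n):=(-1)^{i+1}F_i$ to lie in $\Omega_\mathcal{F}^{n-1}(\CR^{2n})$. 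The form $\beta_0$ with $\beta_0(X_1,\dots,X_n)=f^{(n)}$ then satisfies $\pounds_{X_i}\beta_0=0$ by commutativity of the $X_i$. If $f^{(n)}$ happens to be complanate, written $f^{(n)}=\sum_{i=k_e+1}^{k_e+k_h}T_i$ with $T_i$ Taylor flat at $\Sigma_i$, I split each $T_i=T_i^{(0)}+X_i(B_i)$ by another application of lemma \ref{Evalemma}; the formal Taylor expansion in $(x_i,y_i)$ of an $X_i$-invariant function is supported on the diagonal while that of an $X_i$-coboundary is supported off it, forcing both $T_i^{(0)}$ and $X_i(B_i)$ to be separately Taylor flat at $\Sigma_i$, and the uniqueness clause of item 2 then yields $T_i^{(0)}=X_i(H_i)$, so $T_i=X_i(H_i+B_i)$ and the complanate residue is absorbed into the coboundary.

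\textbf{Step 2 (directness of the sum).} Suppose $\beta_0\in W_\mathcal{F}^n\cap \ud_\mathcal{F}(\Omega_\mathcal{F}^{n-1})$, so $f_0:=\beta_0(X_1,\dots,X_n)\in C^\infty_\mathcal{F}(\CR^{2n})$ and $f_0=\sum_i(-1)^{i+1}X_i(g_i)$. For each elliptic index $j\leq k_e$ the vector field $X_j$ generates an $S^1$-action whose averaging projector $P_j$ commutes with every $X_i$ and annihilates $\mathrm{im}(X_j)$; averaging the identity kills the $X_j(g_j)$ summand and replaces $g_i$ by $P_j g_i$ for $i\neq j$. Iterating through all elliptic indices reduces to $f_0=\sum_{i=k_e+1}^n X_i(\widetilde g_i)$ with $f_0$ still $\mathcal{F}$-invariant. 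For the hyperbolic factors I appeal to the formal observation that the hyperbolic model $X_i=-x_i\partial_{x_i}+y_i\partial_{y_i}$ multiplies the monomial $x_i^ay_i^b$ by $b-a$, so $X_i(X_i(h))=0$ combined with $h$ smooth forces $X_i(h)$ to be Taylor flat at $\Sigma_i$; iteratively applying this to each summand $X_i(\widetilde g_i)$, together with the uniqueness in item 2 of lemma \ref{Evalemma}, extracts a Taylor flat piece $T_i$ at $\Sigma_i$ while the remainder is re-absorbed into the other summands. The net outcome is $f_0=\sum_{i=k_e+1}^{k_e+k_h}T_i$ with $T_i$ Taylor flat at $\Sigma_i$, so $f_0$ is complanate; the defining condition of $W_\mathcal{F}^n$ then forces $f_0=0$.

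\textbf{Main obstacle.} The delicate step is the hyperbolic bookkeeping in step 2: unlike the elliptic case, there is no averaging projector onto $X_i$-invariants, and the Taylor flatness must be extracted iteratively while tracking how the cross-dependence of $\widetilde g_l$ on $(x_i,y_i)$ for $l\neq i$ interacts with the $X_i$-invariance of $f_0$ and with the uniqueness-up-to-Taylor-flat in lemma \ref{Evalemma}. The same circle of ideas underlies the complanate-absorption of step 1.
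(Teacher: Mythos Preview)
Your plan is correct and follows essentially the same route as the paper: both obtain an $\mathcal{F}$-invariant representative by iterating lemma \ref{Evalemma} over $X_1,\dots,X_n$, and both characterise exactness of an invariant top-form as complanateness by peeling off one $X_i$ at a time and invoking the uniqueness-up-to-Taylor-flat clause at each hyperbolic step. The organisational differences are minor: you separate the two implications (complanate $\Rightarrow$ exact at the end of Step~1, exact $\Rightarrow$ complanate in Step~2), whereas the paper runs a single chain of ``has a solution if and only if'' statements that carries both directions simultaneously; and you dispose of the elliptic indices by explicit $S^1$-averaging, which is a clean shortcut the paper does not spell out (it simply remarks that for elliptic indices one should ``forget about Taylor flatness''). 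The hyperbolic bookkeeping you flag as the main obstacle---tracking how the Taylor-flat residues interact with the remaining summands as one moves from $X_i$ to $X_{i+1}$---is exactly where the paper's proof also becomes intricate, and your sketch of it is consistent with what is needed there.
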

\begin{proof}For $\alpha\in\Omega_\mathcal{F}^n(\CR^{2n})$ it holds $\ud_\mathcal{F}\alpha=0$. Since $\alpha(X_1,\dots,X_n)\in C^\infty(\CR^{2n})$, lemma \ref{Evalemma} asserts that $\alpha(X_1,\dots,X_n)=f_1+X_1(F_{2\cdots n})$ with $X_1(f_1)=0$. Applying again lemma \ref{Evalemma}, $f_1=f_2+X_2(-F_{13\cdots n})$ with $X_2(f_2)=0$, but also $X_1(f_2)=0$ because $X_1(f_1)=0$. Repeating this process for all $X_i$, one finally gets
\begin{equation}
\alpha(X_1,\dots,X_n)=f+\somatorio{i=1}{n}(-1)^{i+1}X_i(F_{1\cdots\hat{i}\cdots n}) \ ,
\end{equation}with $f\in C^\infty_\mathcal{F}(\CR^{2n})$, i.e.: there exists $\beta\in\Omega_\mathcal{F}^n(\CR^{2n})$ and $\zeta\in\Omega_\mathcal{F}^{n-1}(\CR^{2n})$ satisfying $\alpha=\beta+\ud_\mathcal{F}\zeta$ and $\pounds_{X_i}(\beta)=0$ for all $i$ (again, proposition \ref{welldefineform} and item 4 of lemma \ref{Evalemma} guarantee that the forms are well defined).\par

The foliated $n$-form $\beta$ is exact if and only if there exists $\sigma\in\Omega_\mathcal{F}^{n-1}(\CR^{2n})$ such that
\begin{equation}\label{betaeqn}
\beta(X_1,\dots,X_n)=\somatorio{i=1}{n}(-1)^{i+1}X_i(\sigma(X_1,\dots,\hat{X_i},\dots,X_n)) \ .
\end{equation}Applying lemma \ref{Evalemma},
\begin{equation}\label{sigmaeqn}
\sigma(X_1,\dots,\hat{X_i},\dots,X_n)=g_{1\cdots\hat{i}\cdots n}^1+X_1(G_{1\cdots\hat{i}\cdots n}^1) \ ,
\end{equation}with $X_1(g_{1\cdots\hat{i}\cdots n}^1)=0$. Then, substituting equation \ref{sigmaeqn} in \ref{betaeqn}, using $[X_i,X_j]=0$ and invoking lemma \ref{Evalemma},
\begin{equation}
0=\beta(X_1,\dots,X_n)+\somatorio{i=2}{n}(-1)^{i}X_i(g_{1\cdots\hat{i}\cdots n}^1)+X_1\left(\somatorio{i=1}{n}(-1)^{i}X_i(G_{1\cdots\hat{i}\cdots n}^1)\right)
\end{equation}has solution if and only if
\begin{equation}\label{betaT1eqn}
T_1=\beta(X_1,\dots,X_n)+\somatorio{i=2}{n}(-1)^{i}X_i(g_{1\cdots\hat{i}\cdots n}^1) \ ,
\end{equation}where $X_1(T_1)=0$ and, if $T_1\neq 0$, it is Taylor flat at $\Sigma_1$.\par

Once more, applying lemma \ref{Evalemma} to $T_1$ and $g_{1\cdots\hat{i}\cdots n}^1$ with respect to $X_2$,
\begin{equation}\label{T1}
T_1=T_{12}+X_2(t_{12})
\end{equation}and
\begin{equation}\label{g1}
g_{1\cdots\hat{i}\cdots n}^1=g_{1\cdots\hat{i}\cdots n}^{12}+X_2(G_{1\cdots\hat{i}\cdots n}^{12}) \ ,
\end{equation}where $X_2(T_{12})=X_2(g_{1\cdots\hat{i}\cdots n}^{12})=0$, $X_1(T_{12})=X_1(g_{1\cdots\hat{i}\cdots n}^{12})=0$ and $T_{12}$ is Taylor flat at $\Sigma_1$ because $X_1(T_1)=X_1(g_{1\cdots\hat{i}\cdots n}^{12})=0$ and $T_1$ is Taylor flat at $\Sigma_1$.\par

Now, replacing equations \ref{T1} and \ref{g1} in \ref{betaT1eqn}, using $[X_i,X_j]=0$ and because of lemma \ref{Evalemma},
\begin{equation}
0=\beta(X_1,\dots,X_n)-T_{12}+\somatorio{i=3}{n}(-1)^{i}X_i(g_{1\cdots\hat{i}\cdots n}^{12})+X_2\left(-t_{12}+\somatorio{i=2}{n}(-1)^{i}X_i(G_{1\cdots\hat{i}\cdots n}^{12})\right)
\end{equation}has solution if and only if
\begin{equation}
T_2+T_{12}=\beta(X_1,\dots,X_n)+\somatorio{i=3}{n}(-1)^{i}X_i(g_{1\cdots\hat{i}\cdots n}^{12}) \ ,
\end{equation}where $X_1(T_2)=X_2(T_2)=0$ and, if $T_2\neq 0$, it is Taylor flat at $\Sigma_2$.

The next step is to decompose $T_2$, $T_{12}$ and $g_{1\cdots\hat{i}\cdots n}^{12}$ with respect to $X_3$ and argue as before. Continuing with this process for all $X_i$ one obtains $\beta(X_1,\dots,X_n)=T_n+T_{(n-1)n}+\cdots+T_{1\cdots n}$, where $T_n,\dots,T_{1\cdots n}\in C^\infty_\mathcal{F}(\CR^{2n})$ and, if $T_{i\cdots n}\neq 0$, it is Taylor flat at $\Sigma_i$.

We were assuming $k_e=0$ and $k_h=n$. The case when $k_e\neq 0$ is straightforward: just forget about Taylor flatness for those indices.

From $\Omega_\mathcal{F}^n(\CR^{2n})=W_\mathcal{F}^n(\CR^{2n})\oplus\ud_\mathcal{F}(\Omega_\mathcal{F}^{n-1}(\CR^{2n}))$ we obtain $H_\mathcal{F}^n(\CR^{2n})=W_\mathcal{F}^n(\CR^{2n})$, by definition any $\beta\in W_\mathcal{F}^n(\CR^{2n})$ can be given by a function vanishing at certain points (proposition \ref{welldefineform}) and being noncomplanate: $\beta(X_1,\dots,X_n)=f\in C^\infty(\CR^{2n})$, $f=0$ at $\Sigma_1\cup\cdots\cup\Sigma_n$ and is noncomplanate, if nonzero. The Lie derivative condition further implies that such a function is constant along the leaves.
\end{proof}

\begin{rema}The proofs of theorems \ref{smoothcohomologycomputed1} and \ref{smoothcohomologycomputedn} also hold in the analytic category after, obvious and minor, modifications (essentially getting read of Taylor flat functions).
\end{rema}

%########################################

Before proving theorem \ref{cohomologycomputed}, it is worthwhile to look at a particular (smooth) case to illustrate its intricacy.

\begin{prop}\label{e/h-ff} Consider $(\CR^6,\somatorio{i=1}{3}\ud x_i\wedge\ud y_i)$ with $h_1,h_2,h_3\in C^\infty(\CR^6)$ a Williamson basis. If both $X_1,X_2$ are of hyperbolic type and $X_3$ is of elliptic type, then:
\begin{equation}
\mathrm{ker}(\ud_\mathcal{F}:\Omega_\mathcal{F}^2(\CR^6)\to\Omega_\mathcal{F}^3(\CR^6))=W_\mathcal{F}^2(\CR^6)\oplus\ud_\mathcal{F}(\Omega_\mathcal{F}^1(\CR^6)) \ ,
\end{equation}where $W_\mathcal{F}^2(\CR^6)$ is the set of $2$-forms $\beta\in\Omega_\mathcal{F}^2(\CR^6)$ such that $\pounds_{X_i}(\beta)=0$ for $i=1,2,3$, and if $\beta(X_i,X_j)\neq 0$ it is noncomplanate.\par
\end{prop}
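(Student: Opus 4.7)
My plan is to adapt the strategy of the proofs of theorems \ref{smoothcohomologycomputed1} and \ref{smoothcohomologycomputedn}: start from a closed $\alpha\in\Omega_{\mathcal{F}}^2(\CR^6)$, modify it iteratively by foliated exact forms to obtain a representative $\beta$ satisfying $\pounds_{X_i}\beta=0$ for $i=1,2,3$, and then show that any such $\beta$ which is in addition exact must vanish. Denoting $\alpha_{ij}:=\alpha(X_i,X_j)$, closedness of $\alpha$ collapses to the single cocycle identity $X_1\alpha_{23}-X_2\alpha_{13}+X_3\alpha_{12}=0$, which will provide the crucial compatibilities needed during the reduction.

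To obtain an invariant representative, first apply lemma \ref{Evalemma} to each of $\alpha_{12},\alpha_{13},\alpha_{23}$ with respect to $X_1$, producing decompositions $\alpha_{ij}=a_{ij}^1+X_1(F_{ij}^1)$ with $X_1(a_{ij}^1)=0$. The cocycle identity translates into a relation among $F_{23}^1,F_{13}^1,F_{12}^1$ and the $a_{ij}^1$ that lets me assemble a 1-form $\zeta^{(1)}\in\Omega_{\mathcal{F}}^1(\CR^6)$ for which every component of $\alpha-\ud_{\mathcal{F}}\zeta^{(1)}$ is annihilated by $X_1$. I then repeat the construction along $X_2$ and $X_3$; at each stage item (3) of lemma \ref{Evalemma} allows the new decompositions to be chosen so that the previously established $X_k$-annihilations survive. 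Proposition \ref{welldefineform} together with item (4) of lemma \ref{Evalemma} guarantees that the auxiliary functions vanish on the appropriate $\Sigma_i$, so every intermediate 1-form is a genuine element of $\Omega_{\mathcal{F}}^1(\CR^6)$. The outcome is $\beta=\alpha-\ud_{\mathcal{F}}\zeta$ with $\pounds_{X_k}\beta=0$ for $k=1,2,3$.

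To establish the direct-sum property I must show that if $\beta\in W_{\mathcal{F}}^2(\CR^6)$ is exact, say $\beta=\ud_{\mathcal{F}}\sigma$, then $\beta=0$. Each component $\beta_{ij}=X_i\sigma_j-X_j\sigma_i$ is annihilated by both $X_i$ and $X_j$ because $\pounds_{X_k}\beta=0$. Decomposing $\sigma_j$ along $X_i$ and $\sigma_i$ along $X_j$ via lemma \ref{Evalemma}, and exploiting the uniqueness-up-to-Taylor-flat clause of item (2) in the hyperbolic directions $X_1,X_2$ (no Taylor-flat ambiguity appears in the elliptic direction $X_3$ by item (1)), I expect each $\beta_{ij}$ to be expressible as a sum of a function Taylor flat at $\Sigma_1$ and one Taylor flat at $\Sigma_2$, i.e.\ complanate; the defining condition of $W_{\mathcal{F}}^2$ then forces $\beta_{ij}=0$. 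The main technical hurdle throughout is coordinating the three rounds of reduction so that later passes do not reintroduce dependence on previously eliminated $X_k$, nor destroy the well-definedness of the intermediate foliated forms; item (3) of lemma \ref{Evalemma} is the crucial non-destructive tool, exactly as in the top-degree proof of theorem \ref{smoothcohomologycomputedn}, and bookkeeping of the Taylor-flat residuals produced by the hyperbolic pair is what makes or breaks both halves of the argument.
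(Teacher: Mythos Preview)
Your first reduction step contains a genuine gap. You claim that after decomposing all three components $\alpha_{12},\alpha_{13},\alpha_{23}$ with respect to $X_1$, the cocycle identity produces a $\zeta^{(1)}$ such that \emph{every} component of $\alpha-\ud_{\mathcal{F}}\zeta^{(1)}$ is $X_1$-annihilated. But $(\ud_{\mathcal{F}}\zeta)(X_2,X_3)=X_2\zeta_3-X_3\zeta_2$ contains no $X_1$-derivative, so the $(2,3)$-component cannot be corrected in this way. Concretely, if you set $\zeta_1=0$, $\zeta_2=F_{12}^1$, $\zeta_3=F_{13}^1$ to make the $(1,2)$- and $(1,3)$-components equal to $a_{12}^1,a_{13}^1$, the cocycle for $\alpha'=\alpha-\ud_{\mathcal{F}}\zeta^{(1)}$ gives
\[
X_1\bigl(\alpha'_{23}\bigr)=X_2\,a_{13}^1-X_3\,a_{12}^1 ,
\]
and because $X_1$ is \emph{hyperbolic}, item (2) of lemma~\ref{Evalemma} only forces the right-hand side to be Taylor flat at $\Sigma_1$, not zero. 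Any further modification of $\zeta_2,\zeta_3$ that preserves the $X_1$-invariance of the first two components must have $X_1\eta_2=X_1\eta_3=0$, and hence cannot kill this residual. So the iterative ``one vector field at a time'' scheme breaks down at the very first step when that step is hyperbolic.

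The paper avoids this by reversing the order: it decomposes first with respect to the \emph{elliptic} $X_3$, where uniqueness in lemma~\ref{Evalemma} is exact. The cocycle identity then splits cleanly as $0=(X_1f_{23}-X_2f_{13})+X_3(\cdots)$, forcing both $X_1f_{23}=X_2f_{13}$ and $X_3$-invariance of the remaining $(1,2)$-component. The relation $X_1f_{23}=X_2f_{13}$ is precisely a degree-$1$ closedness condition for a reduced $1$-form $\alpha_3$, to which the already-proved theorem~\ref{smoothcohomologycomputed1} is applied as a black box; this step is what handles the hyperbolic pair, and it is absent from your outline. Your exactness argument has the right target (complanate residuals from the two hyperbolic directions), but the paper again orders the successive decompositions elliptic-first ($X_3$, then $X_2$, then $X_1$) so that the uniqueness step introduces no Taylor-flat ambiguity and the residuals appear only at the hyperbolic stages; you should adopt the same ordering rather than starting with $X_1$.
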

\begin{proof}The condition $\ud_\mathcal{F}\alpha=0$ implies, for any $\alpha\in\Omega_\mathcal{F}^2(\CR^6)$,
\begin{equation}
0=X_1(\alpha(X_2,X_3))-X_2(\alpha(X_1,X_3))+X_3(\alpha(X_1,X_2)) \ .
\end{equation}Lemma \ref{Evalemma} gives
\begin{equation}\label{eq23}
\alpha(X_1,X_3)=f_{13}+X_3(F_{13}) \ \text{and} \  \alpha(X_2,X_3)=f_{23}+X_3(F_{23}) \ ,
\end{equation}with $X_3(f_{13})=X_3(f_{23})=0$.\par

Because $[X_i,X_j]=0$,
\begin{equation}
0=X_1(f_{23})-X_2(f_{13})+X_3\left(\alpha(X_1,X_2)+X_1(F_{23})-X_2(F_{13})\right) \ ,
\end{equation}by uniqueness (lemma \ref{Evalemma}),
\begin{equation}\label{eq12}
\alpha(X_1,X_2)=f_{12}+X_2(F_{13})-X_1(F_{23}) \ ,
\end{equation}with $X_3(f_{12})=0$ and
\begin{equation}
X_1(f_{23})=X_2(f_{13}) \ .
\end{equation}\par

Defining $\alpha_3\in\Omega_\mathcal{F}^1(\CR^6)$ by
\begin{equation}
\alpha_3(X_1)=f_{13} \ , \ \alpha_3(X_2)=f_{23} \ \text{and} \ \alpha_3(X_3)=0 \ ,
\end{equation}it is clear that $\ud_\mathcal{F}\alpha_3=0$ (proposition \ref{welldefineform} and item 4 of lemma \ref{Evalemma} guarantee that it is well defined). Theorem \ref{smoothcohomologycomputed1}, then, implies $\alpha_3=\beta_3+\ud_\mathcal{F}G_3$, with $\beta_3\in W_\mathcal{F}^1(\CR^6)$. In other words:
\begin{equation}\label{eqi13}
f_{13}=g_{13}+X_1(G_3) \ , \ f_{23}=g_{23}+X_2(G_3) \ \text{and} \ X_3(G_3)=0
\end{equation}\par

Applying repeatedly lemma \ref{Evalemma}, for each $X_i$ with $i\neq 3$, to the function $f_{12}$ one gets
\begin{equation}\label{eqi12}
f_{12}=g_{12}-X_1(G_{23})+X_2(G_{13}) \ ,
\end{equation}with $X_1(g_{12})=X_2(g_{12})=0$ and $X_3(g_{12})=X_3(G_{13})=X_3(G_{23})=0$, because $X_3(f_{12})=0$.\par

Summing up, plugging equation \ref{eqi13} in equation \ref{eq23}, using $X_3(G_{13})=X_3(G_{23})=0$, and equation \ref{eqi12} in equation \ref{eq12}:
\begin{eqnarray}\label{eqdeco13}
\alpha(X_1,X_3)&=&g_{13}+X_1(G_3)+X_3(F_{13}+G_{13}) \\
\alpha(X_2,X_3)&=&g_{23}+X_2(G_3)+X_3(F_{23}+G_{23})
\end{eqnarray}and
\begin{equation}\label{eqdeco23}
\alpha(X_1,X_2)=g_{12}-X_1(F_{23}+G_{23})+X_2(F_{13}+G_{13}) \ .
\end{equation}Wherefore $\alpha=\beta+\ud_\mathcal{F}\zeta$ with $\beta\in W_\mathcal{F}^1(\CR^6)$;
\begin{equation}
\beta(X_1,X_2)=g_{12} \ , \ \beta(X_1,X_3)=g_{13} \ , \ \beta(X_2,X_3)=g_{23}
\end{equation}and
\begin{equation}
\zeta(X_1)=-F_{13}-G_{13} \ , \ \zeta(X_2)=-F_{23}-G_{23} \ , \ \zeta(X_3)=G_3
\end{equation}(as always, proposition \ref{welldefineform} and item 4 of lemma \ref{Evalemma} guarantee that the forms are well defined).\par

The condition $\pounds_{X_i}(\beta)=0$ for $i=1,2,3$ implies $\ud_\mathcal{F}\beta=0$, and there exists $\sigma\in\Omega_\mathcal{F}^1(\CR^6)$ such that $\ud_\mathcal{F}\sigma=\beta$ if and only if
\begin{equation}\label{betaeq13}
\beta(X_i,X_j)=X_i(\sigma(X_j))-X_j(\sigma(X_i)) \ .
\end{equation}\par

Applying lemma \ref{Evalemma},
\begin{equation}\label{sigmaeq13}
\sigma(X_i)=s_{i3}+X_3(S_{i3}) \ ,
\end{equation}with $X_3(s_{i3})=0$. Then, plugging equation \ref{sigmaeq13} in \ref{betaeq13}, using $[X_i,X_j]=0$ and using uniqueness (lemma \ref{Evalemma}),
\begin{equation}
0=\beta(X_i,X_j)+X_j(s_{i3})-X_i(s_{j3})+X_3(X_j(S_{i3})-X_i(S_{j3}))
\end{equation}has solution if and only if
\begin{equation}\label{betaeq23}
0=\beta(X_i,X_j)+X_j(s_{i3})-X_i(s_{j3}) \ .
\end{equation}\par

Again, applying lemma \ref{Evalemma},
\begin{equation}\label{sigmaeq23}
s_{i3}=s_{i23}+X_2(S_{i23}) \ ,
\end{equation}with $X_2(s_{i23})=0$ and $X_3(s_{i23})=0$, because $X_3(s_{i3})=0$. Then, replacing equation \ref{sigmaeq23} in \ref{betaeq23}, using $[X_i,X_j]=0$ and because of lemma \ref{Evalemma},
\begin{equation}
0=\beta(X_i,X_j)+X_j(s_{i23})-X_i(s_{j23})+X_2(X_j(S_{i23})-X_i(S_{j23}))
\end{equation}has solution if and only if
\begin{equation}
T_{ij2}=\beta(X_i,X_j)+X_j(s_{i23})-X_i(s_{j23}) \ ,
\end{equation}where $X_3(T_{ij2})=X_2(T_{ij2})=0$ and $T_{ij2}$ is Taylor flat at $\Sigma_2$. Explicitly,
\begin{equation}\label{betaeq23Ta}
T_{122}=\beta(X_1,X_2)-X_1(s_{223}) \ ,
\end{equation}
\begin{equation}\label{betaeq23Tb}
T_{132}=\beta(X_1,X_3)-X_1(s_{323})
\end{equation}and
\begin{equation}\label{betaeq23Tc}
T_{232}=\beta(X_2,X_3) \ .
\end{equation}\par

Once more, applying lemma \ref{Evalemma},
\begin{equation}\label{sigmaeq12}
T_{ij2}=t_{ij}+X_1(T_{ij12}) \ ,
\end{equation}with $X_1(t_{ij})=0$, $X_2(t_{ij})=X_3(t_{ij})=0$ and $t_{ij}$ is Taylor flat at $\Sigma_2$, because $X_2(T_{ij2})=X_3(T_{ij2})=0$ and $T_{ij2}$ is Taylor flat at $\Sigma_2$. Then, substituting equation \ref{sigmaeq12} in \ref{betaeq23Ta}, \ref{betaeq23Tb} and \ref{betaeq23Tc}, using $[X_i,X_j]=0$ and using lemma \ref{Evalemma},
\begin{equation}
\beta(X_1,X_2)-t_{12}=X_1(s_{223}+T_{1212}) \ ,
\end{equation}
\begin{equation}
\beta(X_1,X_3)-t_{13}=X_1(s_{323}+T_{1312})
\end{equation}and
\begin{equation}
\beta(X_2,X_3)-t_{23}=X_1(T_{2312}) \ .
\end{equation}have solution if and only if
\begin{equation}
\beta(X_1,X_2)=t_{12}+T_{12} \ , \ \beta(X_1,X_3)=t_{13}+T_{13} \ \text{and} \ \beta(X_2,X_3)=t_{23}+T_{23}
\end{equation}where each $T_{ij}\in C^\infty_\mathcal{F}(\CR^6)$ and is Taylor flat at $\Sigma_1$.
\end{proof}

%########################################

\begin{teo}\label{cohomologycomputed}
{\normalfont{[\textbf{Analytic case}]}} Consider $(\CR^{2n},\somatorio{i=1}{n}\ud x_i\wedge\ud y_i)$ endowed with a analytic distribution $\mathcal{F}$ generated by a Williamson basis of type $(k_e,k_h,0)$, then the following decomposition holds:
\begin{equation}
\mathrm{ker}(\ud_\mathcal{F}:\Omega_\mathcal{F}^k(\CR^{2n})\to\Omega_\mathcal{F}^{k+1}(\CR^{2n}))=W_\mathcal{F}^k(\CR^{2n})\oplus\ud_\mathcal{F}(\Omega_\mathcal{F}^{k-1}(\CR^{2n})) \ ,
\end{equation}where $W_\mathcal{F}^k(\CR^{2n})$ is the set of analytic $k$-forms $\beta\in\Omega_\mathcal{F}^k(\CR^{2n})$ such that $\pounds_{X_i}(\beta)=0$ for all $i$.
\end{teo}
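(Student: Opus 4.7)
The plan is to prove the statement by iterating the analytic version of Lemma \ref{Evalemma} across all vector fields $X_1,\ldots,X_n$, following the template that succeeded in Theorems \ref{smoothcohomologycomputed1}, \ref{smoothcohomologycomputedn} and Proposition \ref{e/h-ff}. What makes the analytic case uniformly tractable is the remark that in the analytic category a Taylor flat function is identically zero, so the ``uniqueness up to Taylor flat'' clause in Lemma \ref{Evalemma} becomes genuine uniqueness, and the obstructions that prevent a clean statement in the smooth category for intermediate degrees simply vanish.

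The argument splits in two. First I would show that every closed $\alpha\in\Omega_\mathcal{F}^k(\CR^{2n})$ is cohomologous to some $\beta\in W_\mathcal{F}^k(\CR^{2n})$, by induction on the number of vector fields under which the form is already invariant. Assume after modifying $\alpha$ by an exact form I have $\alpha^{(j-1)}$ whose components are annihilated by $X_1,\ldots,X_{j-1}$. Apply Lemma \ref{Evalemma} with respect to $X_j$ to each function $\alpha^{(j-1)}(X_{i_1},\ldots,X_{i_k})$ to write it as $g_I+X_j(G_I)$ with $X_j(g_I)=0$; item (3) of Lemma \ref{Evalemma} keeps both $g_I$ and $G_I$ annihilated by $X_1,\ldots,X_{j-1}$, and Proposition \ref{welldefineform} together with item (4) of Lemma \ref{Evalemma} allows the $G_I$'s to be assembled into a well-defined foliated $(k-1)$-form $\zeta^{(j)}$. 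Setting $\alpha^{(j)}=\alpha^{(j-1)}-\ud_\mathcal{F}\zeta^{(j)}$, the cocycle condition combined with the commutativity $[X_i,X_j]=0$ and the induction hypothesis forces each component of $\alpha^{(j)}$ to be $X_j$-invariant as well. After $n$ steps, the accumulated correction $\zeta=\zeta^{(1)}+\cdots+\zeta^{(n)}$ witnesses $\alpha-\ud_\mathcal{F}\zeta\in W_\mathcal{F}^k(\CR^{2n})$.

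Second, I would show that the sum is direct, that is, if $\beta=\ud_\mathcal{F}\sigma$ lies in $W_\mathcal{F}^k(\CR^{2n})$ then $\beta=0$. Here I mirror the exactness analysis performed at the end of Proposition \ref{e/h-ff}: decompose each $\sigma(X_J)$ iteratively with respect to $X_1,X_2,\ldots,X_n$ via Lemma \ref{Evalemma}, substitute into the coboundary relation $\beta(X_I)=\sum_{l}(-1)^{l+1}X_{i_l}(\sigma(X_{I\setminus\{i_l\}}))$, and use the cocycle condition $\ud_\mathcal{F}\beta=0$ (automatic from $\pounds_{X_i}\beta=0$) together with the uniqueness clause of Lemma \ref{Evalemma} to peel off the contribution of each $X_j$. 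At every stage the obstruction that prevents cancellation is, a priori, a function Taylor flat at some $\Sigma_j$; analyticity kills it, and the induction terminates with $\beta(X_I)=0$ for every $I$.

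The main obstacle is the combinatorial bookkeeping in both halves of the proof: one must verify that the $G_I$ produced separately for distinct multi-indices $I$ genuinely assemble into a foliated $(k-1)$-form, which requires matching the vanishing on each $\Sigma_i$ prescribed by Proposition \ref{welldefineform}, and one must track how the $X_j$-decomposition respects the invariance previously established under $X_1,\ldots,X_{j-1}$. It is precisely this bookkeeping that becomes intractable in the smooth category for intermediate degrees, which is why Theorems \ref{smoothcohomologycomputed1} and \ref{smoothcohomologycomputedn} treat only $k=1$ and $k=n$; in the analytic setting the Taylor flat remainders are automatically zero, so the inductive pattern visible in Proposition \ref{e/h-ff} extrapolates to arbitrary $k$ and produces the clean statement above.
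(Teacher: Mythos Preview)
Your proposal is correct and rests on the same ingredients as the paper's proof: iterated use of the analytic Lemma \ref{Evalemma}, the cocycle relation, Proposition \ref{welldefineform} with item (4) for well-definedness, and the crucial fact that analyticity turns ``uniqueness up to flat functions'' into genuine uniqueness. The organization, however, differs. The paper does not run an induction on the index $j$; instead it fixes a $(k{+}1)$-tuple $(j_1,\dots,j_{k+1})$, decomposes the $k$ components $\alpha(X_{j_1},\dots,\hat X_{j_i},\dots,X_{j_{k+1}})$ simultaneously via Lemma \ref{Evalemma}, feeds these into the single cocycle identity (equation \eqref{eq00}) to extract the remaining component, and then lets $j_1$ range over all indices, invoking uniqueness to match the various decompositions and force the $g$'s into $C^\omega_\mathcal{F}$. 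Your route---subtract $\ud_\mathcal{F}\zeta^{(j)}$ with $\zeta^{(j)}$ built from $\iota_{X_j}\alpha^{(j-1)}$ and use the averaged cocycle relation to propagate $X_j$-invariance to components not containing $j$---is a cleaner Cartan-homotopy style iteration and avoids the ``varying $j_1$'' matching step; the paper's approach, on the other hand, makes the role of the cocycle condition more explicit at each $(k{+}1)$-tuple. One small imprecision worth tightening: you decompose every $\alpha^{(j-1)}(X_I)$ as $g_I+X_j(G_I)$, but only the $G_I$ with $j\in I$ feed into $\zeta^{(j)}$ (via $\zeta^{(j)}(X_J)=\pm G_{J\cup\{j\}}$ for $j\notin J$), so the assembly into a $(k{-}1)$-form should be stated that way. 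The second half of your argument (directness of the sum) matches the paper's exactly.
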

\begin{proof}It remains to prove when the degree is different from $1$ and $n$, since the proofs of theorems \ref{smoothcohomologycomputed1} and \ref{smoothcohomologycomputedn}, as mentioned, work for these particular cases.\par

If $\alpha\in\Omega_\mathcal{F}^k(\CR^{2n})$ the condition $\ud_\mathcal{F}\alpha=0$ implies
\begin{equation}\label{eq00}
0=X_{j_1}(\alpha(X_{j_2},\dots,X_{j_{k+1}}))+\somatorio{i=2}{k+1}(-1)^{i+1}X_{j_i}(\alpha(X_{j_1},\dots,\hat{X_{j_i}},\dots,X_{j_{k+1}})) \ .
\end{equation}Applying successively of lemma \ref{Evalemma}, with respect to each $X_{j_i}$ ($i\neq 1$), gives
\begin{eqnarray}\label{eq1k+1}
\alpha(X_{j_1},\dots,\hat{X_{j_i}},\dots,X_{j_{k+1}})&=&g_{j_1\cdots\hat{j_i}\cdots j_{k+1}}^{j_1\cdots j_{k+1}}+X_{j_1}(F_{j_1\cdots\hat{j_i}\cdots j_{k+1}}^{j_1\cdots j_{k+1}}) \nonumber \\ & & +\somatorio{\underset{l\neq i}{l=2}}{k+1}(-1)^{l_i+1}X_{j_l}(G_{j_1j_2\cdots\hat{j_i}\cdots\hat{j_l}\cdots j_{k+1}}^{j_1\cdots j_{k+1}}) \ ,
\end{eqnarray}where $X_{j_m}(g_{j_1\cdots\hat{j_i}\cdots j_{k+1}}^{j_1\cdots j_{k+1}})=0$ for $m=1,\dots,k+1\neq i$ and
\begin{equation}
l_i=\left\{\begin{array}{l}
l \ \text{if} \ l<i \\ 
l+1 \ \text{if} \ l>i
\end{array}\right. \ .
\end{equation}\par

%$X_{j_m}(G_{j_1j_2\cdots\hat{j_i}\cdots\hat{j_l}\cdots j_{k+1}}^{j_1\cdots %j_{k+1}})=0$  for $l>m$ 

Substituting equation \ref{eq1k+1} in \ref{eq00} (using $[X_i,X_j]=0$),
\begin{eqnarray}
0&=&\somatorio{i=2}{k+1}(-1)^{i+1}X_{j_i}\left(g_{j_1\cdots\hat{j_i}\cdots j_{k+1}}^{j_1\cdots j_{k+1}}+\somatorio{\underset{l\neq i}{l=2}}{k+1}(-1)^{l_i+1}X_{j_l}(G_{j_1j_2\cdots\hat{j_i}\cdots\hat{j_l}\cdots j_{k+1}}^{j_1\cdots j_{k+1}})\right) \nonumber \\
&&+X_{j_1}\left(\alpha(X_{j_2},\dots,X_{j_{k+1}})+\somatorio{i=2}{k+1}(-1)^{i+1}X_{j_i}(F_{j_1\cdots\hat{j_i}\dots j_{k+1}}^{j_1\cdots j_{k+1}})\right) \ ,
\end{eqnarray}and by uniqueness (lemma \ref{Evalemma}):
\begin{equation}\label{eq2k+1}
\alpha(X_{j_2},\dots,X_{j_{k+1}})=f_{j_2\cdots j_{k+1}}^{j_1\cdots j_{k+1}}+\somatorio{i=2}{k+1}(-1)^{i}X_{j_i}(F_{j_1\cdots\hat{j_i}\cdots j_{k+1}}^{j_1\cdots j_{k+1}}) \ ,
\end{equation}with $X_{j_1}(f_{j_2\cdots j_{k+1}}^{j_1\cdots j_{k+1}})=0$. 

%\begin{equation}
%0=\somatorio{i=2}{k+1}(-1)^{i+1}X_{j_i}(g_{j_1\cdots\hat{j_i}\cdots j_{k+1}}^{j_1\cdots j_{k+1}})+\somatorio{\underset{l\neq %i}{l=2}}{k+1}(-1)^{l_i}X_{j_l}\left(\somatorio{i=2}{k+1}(-1)^{i+1}X_{j_i}(G_{j_1j_2\cdots\hat{j_i}\cdots\hat{j_l}\cdots j_{k+1}}^{j_1\cdots j_{k+1}})\right)
%\end{equation}

Again, applying repeatedly lemma \ref{Evalemma}, for each $X_{j_i}$ with $i\neq 1$, to the function $f_{j_2\cdots j_{k+1}}^{j_1\cdots j_{k+1}}$ one gets
\begin{equation}\label{eqi2k+1}
f_{j_2\cdots j_{k+1}}^{j_1\cdots j_{k+1}}=g_{j_2\cdots j_{k+1}}^{j_1\cdots j_{k+1}}+\somatorio{i=2}{k+1}(-1)^{i}X_{j_i}(G_{j_1\cdots\hat{j_i}\cdots j_{k+1}}^{j_1\cdots j_{k+1}}) \ ,
\end{equation}with $X_{j_i}(g_{j_2\cdots j_{k+1}}^{j_1\cdots j_{k+1}})=0$ for $i=1,\dots,k+1$ and $X_{j_1}(G_{j_1\cdots\hat{j_i}\cdots j_{k+1}}^{j_1\cdots j_{k+1}})=0$, because $X_{j_1}(f_{j_2\cdots j_{k+1}}^{j_1\cdots j_{k+1}})=0$.\par

Using $X_{j_1}(G_{j_1\cdots\hat{j_i}\cdots j_{k+1}}^{j_1\cdots j_{k+1}})=0$, and substituting equation \ref{eqi2k+1} in equation \ref{eq2k+1}:
\begin{eqnarray}\label{eqdeco1k+1}
\alpha(X_{j_1},\dots,\hat{X_{j_i}},\dots,X_{j_{k+1}})&=&g_{j_1\cdots\hat{j_i}\cdots j_{k+1}}^{j_1\cdots j_{k+1}}+X_{j_1}(F_{j_1\cdots\hat{j_i}\cdots j_{k+1}}^{j_1\cdots j_{k+1}}+G_{j_1\cdots\hat{j_i}\cdots j_{k+1}}^{j_1\cdots j_{k+1}}) \nonumber \\ & & +\somatorio{\underset{l\neq i}{l=2}}{k+1}(-1)^{l_i+1}X_{j_l}(G_{j_1j_2\cdots\hat{j_i}\cdots\hat{j_l}\cdots j_{k+1}}^{j_1\cdots j_{k+1}}) \ ,
\end{eqnarray}for $i\neq 1$, and
\begin{equation}\label{eqdeco2k+1}
\alpha(X_{j_2},\dots,X_{j_{k+1}})=g_{j_2\cdots j_{k+1}}^{j_1\cdots j_{k+1}}+\somatorio{i=2}{k+1}(-1)^{i}X_{j_i}(F_{j_1\cdots\hat{j_i}\cdots j_{k+1}}^{j_1\cdots j_{k+1}}+G_{j_1\cdots\hat{j_i}\cdots j_{k+1}}^{j_1\cdots j_{k+1}}) \ .
\end{equation}\par

A priori it cannot be guaranteed that the $g$'s belong to $C^\omega_\mathcal{F}(\CR^{2n})$, however, varying $j_1$ from $1$ to $n$, there is more than one decomposition like equations \ref{eqdeco1k+1} and \ref{eqdeco2k+1} for each combinations of vector fields. By the uniqueness of these decompositions (lemma \ref{Evalemma}) this yields $\alpha=\beta+\ud_\mathcal{F}\zeta$. There exists a correct number of functions to define $\beta$ and $\zeta$, the $g$'s and $F+G$'s of equations \ref{eqdeco1k+1} and \ref{eqdeco2k+1} (after applying uniqueness and identifying some of them, and using proposition \ref{welldefineform} and item 4 of lemma \ref{Evalemma} to guarantee that the forms are well defined).\par

The condition $\pounds_{X_i}(\beta)=0$ for all $i$ implies $\ud_\mathcal{F}\beta=0$, and there exists $\sigma\in\Omega_\mathcal{F}^{k-1}(\CR^{2n})$ such that $\ud_\mathcal{F}\sigma=\beta$ if and only if
\begin{equation}\label{betaeq1k}
\beta(X_{j_1},\dots,X_{j_k})=\somatorio{i=1}{k}(-1)^{i+1}X_{j_i}(\sigma(X_{j_1},\dots,\hat{X_{j_i}},\dots,X_{j_k})) \ .
\end{equation}\par

Applying lemma \ref{Evalemma},
\begin{equation}\label{sigmaeq1k}
\sigma(X_{j_1},\dots,\hat{X_{j_i}},\dots,X_{j_k})=s_{j_1\cdots\hat{j_i}\cdots j_k}^{j_1}+X_{j_1}(S_{j_1\cdots\hat{j_i}\cdots j_k}^{j_1}) \ ,
\end{equation}with $X_{j_1}(s_{j_1\cdots\hat{j_i}\cdots j_k}^{j_1})=0$. Now plugging equation \ref{sigmaeq1k} in \ref{betaeq1k} and  using the commutation of the vector fields ($[X_i,X_j]=0$) and because of  uniqueness (lemma \ref{Evalemma}), we obtain,
\begin{equation}
0=\beta(X_{j_1},\dots,X_{j_k})+\somatorio{i=2}{k}(-1)^{i}X_{j_i}(s_{j_1\cdots\hat{j_i}\cdots j_k}^{j_1})+X_{j_1}\left(\somatorio{i=1}{k}(-1)^{i}X_{j_i}(S_{j_1\cdots\hat{j_i}\cdots j_k}^{j_1})\right)
\end{equation}has solution if and only if,
\begin{equation}\label{betaeq2k}
0=\beta(X_{j_1},\dots,X_{j_k})+\somatorio{i=2}{k}(-1)^{i}X_{j_i}(s_{j_1\cdots\hat{j_i}\cdots j_k}^{j_1}) \ .
\end{equation}\par

Again, applying lemma \ref{Evalemma},
\begin{equation}\label{sigmaeq2k}
s_{j_1\cdots\hat{j_i}\cdots j_k}^{j_1}=s_{j_1\cdots\hat{j_i}\cdots j_k}^{j_1j_2}+X_{j_2}(S_{j_1\cdots\hat{j_i}\cdots j_k}^{j_1j_2}) \ ,
\end{equation}with $X_{j_2}(s_{j_1\cdots\hat{j_i}\cdots j_k}^{j_1j_2})=0$. Then, plugging equation \ref{sigmaeq2k} in \ref{betaeq2k}, using $[X_i,X_j]=0$ and invoking uniqueness (lemma \ref{Evalemma}),
\begin{equation}
0=\beta(X_{j_1},\dots,X_{j_k})+\somatorio{i=3}{k}(-1)^{i}X_{j_i}(s_{j_1\cdots\hat{j_i}\cdots j_k}^{j_1j_2})+X_{j_2}\left(\somatorio{i=2}{k}(-1)^{i}X_{j_i}(S_{j_1\cdots\hat{j_i}\cdots j_k}^{j_1j_2})\right)
\end{equation}has solution if and only if,
\begin{equation}
0=\beta(X_{j_1},\dots,X_{j_k})+\somatorio{i=3}{k}(-1)^{i}X_{j_i}(s_{j_1\cdots\hat{j_i}\cdots j_k}^{j_1j_2}) \ .
\end{equation}\par

Following the same procedure for all $X_{j_i}$, $i=3,\dots,k$, this yields $\beta(X_{j_1},\dots,X_{j_k})=0$.
\end{proof}

This determines all foliated cohomology groups in the analytic case,

\begin{coro}The foliated cohomology groups in the analytic case are determined for $k=1,\dots,n$ by,
\begin{eqnarray}
H_\mathcal{F}^k(\CR^{2n})&\cong&\displaystyle\bigoplus_{(j_1,\dots, j_k)}\{f_{j_1,\dots, j_k}\in C^\omega(\CR^{2n}) \nonumber \\
&;&  f_{j_1,\dots, j_k}(p)=f(h_1(p),\dots,h_n(p)) \text{and} \ f\big{|}_{\Sigma_{j_1}\cup\cdots\cup\Sigma_{j_k}}=0 \}
\end{eqnarray}where the right hand side has  $\left(\begin{array}{c}n \\ k\end{array}\right)$ summands.
\end{coro}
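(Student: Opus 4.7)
The plan is to read off the cohomology from the characterization of $W^k_\mathcal{F}(\CR^{2n})$ provided by Theorem \ref{cohomologycomputed}, which already gives $H^k_\mathcal{F}(\CR^{2n})\cong W^k_\mathcal{F}(\CR^{2n})$, and then to parametrize $W^k_\mathcal{F}(\CR^{2n})$ by the data appearing on the right-hand side of the corollary. Since $\mathcal{F}$ is generated over $C^\omega(\CR^{2n})$ by the Williamson basis $X_1,\ldots,X_n$, any $\beta\in\Omega^k_\mathcal{F}(\CR^{2n})$ is completely determined by the $\binom{n}{k}$ functions $f_{j_1\cdots j_k}:=\beta(X_{j_1},\ldots,X_{j_k})$ with $1\leq j_1<\cdots<j_k\leq n$. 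Proposition \ref{welldefineform} forces each $f_{j_1\cdots j_k}$ to vanish on $\Sigma_{j_1}\cup\cdots\cup\Sigma_{j_k}$, and, because $[X_i,X_j]=0$, the hypothesis $\pounds_{X_i}\beta=0$ for every $i$ reduces to $X_i(f_{j_1\cdots j_k})=0$, i.e.\ each coefficient lies in $C^\omega_\mathcal{F}(\CR^{2n})$.

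The main step is to upgrade $X_i(f_{j_1\cdots j_k})=0$ into a clean dependence on $h_1,\ldots,h_n$. For an elliptic $X_i$, item (5) of Lemma \ref{Evalemma} gives directly $f_{j_1\cdots j_k}=\tilde f(\ldots,x_i^2+y_i^2,\ldots)$. For a hyperbolic $X_i=-x_i\partial_{x_i}+y_i\partial_{y_i}$, Lemma \ref{Evalemma}(5) only provides a quadrant-by-quadrant dependence on $h_i=x_iy_i$, and this is where analyticity is essential: expanding $f_{j_1\cdots j_k}$ as a convergent power series in $(x_i,y_i)$, the equation $X_i(f_{j_1\cdots j_k})=0$ annihilates every monomial $x_i^a y_i^b$ with $a\neq b$ and leaves only powers of $h_i=x_iy_i$, so the four quadrant-descriptions glue into a single global one. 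Iterating over $i=1,\ldots,n$ yields $f_{j_1\cdots j_k}(p)=f(h_1(p),\ldots,h_n(p))$ on all of $\CR^{2n}$ for some analytic function $f$ of $n$ variables.

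For the converse, any collection of $\binom{n}{k}$ such functions assembles by $C^\omega(\CR^{2n})$-multilinearity on the Williamson basis into an element of $W^k_\mathcal{F}(\CR^{2n})$: the vanishing on $\Sigma_{j_1}\cup\cdots\cup\Sigma_{j_k}$ ensures the form is well defined where the relevant $X_{j_i}$ degenerate (cf.\ Proposition \ref{welldefineform}), while $X_i(h_j)=0$ immediately gives $\pounds_{X_i}\beta=0$. The resulting bijection is the claimed direct-sum decomposition. The only delicate point is the hyperbolic step of the previous paragraph, which has no smooth counterpart and indeed fails there, as reflected by the Taylor-flat corrections appearing in Theorems \ref{smoothcohomologycomputed1} and \ref{smoothcohomologycomputedn}.
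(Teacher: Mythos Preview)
Your proof is correct and follows essentially the same approach as the paper: invoke Theorem \ref{cohomologycomputed} to identify $H^k_\mathcal{F}(\CR^{2n})$ with $W^k_\mathcal{F}(\CR^{2n})$, then parametrize $W^k_\mathcal{F}$ by the $\binom{n}{k}$ coefficient functions $\beta(X_{j_1},\ldots,X_{j_k})$, using Proposition \ref{welldefineform} for the vanishing condition and item (5) of Lemma \ref{Evalemma} for the dependence on $h_1,\ldots,h_n$. Your treatment is in fact slightly more complete than the paper's: you spell out why, in the hyperbolic case, the quadrant-by-quadrant description of Lemma \ref{Evalemma}(5) globalizes under analyticity (via the power series argument), whereas the paper simply cites item (5) together with the earlier remark that the lemma adapts to the analytic category; and you include the converse direction explicitly.
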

\begin{proof} Theorem \ref{cohomologycomputed} reads $H_\mathcal{F}^k(\CR^{2n})=W_\mathcal{F}^k(\CR^{2n})$, by definition any $\beta\in W_\mathcal{F}^k(\CR^{2n})$ can be given by $\left(\begin{array}{c}n \\ k\end{array}\right)$ functions vanishing at certain points (proposition \ref{welldefineform}) e.g.: $\beta(X_1,\dots,X_k)=f_{1\cdots k}\in C^\omega(\CR^{2n})$ and $f_{1\cdots k}=0$ at $\Sigma_1\cup\cdots\cup\Sigma_k$. The Lie derivative condition yields (item 5 of lemma \ref{Evalemma}) that each such function has a special dependence on its variables, e.g.: $f_{1\cdots k}(x_1,y_1,\dots,x_n,y_n)=f(h_1,\dots,h_n)$.
\end{proof}

The previous proofs work as well in the smooth category if all vector fields are of elliptic type. Thus for completely elliptic singularities we can compute all the cohomology groups obtaining the following:

\begin{teo}\label{ellipticcohomologycomputed}
{\normalfont{[\textbf{Elliptic case}]}} Consider $(\CR^{2n},\somatorio{i=1}{n}\ud x_i\wedge\ud y_i)$ with $h_1,\dots,h_n\in C^\infty(\CR^{2n})$ a Williamson basis. If all vector fields, $X_1,\dots,X_n$ are of elliptic type, for $k=1,\dots,n$:
\begin{equation}
\mathrm{ker}(\ud_\mathcal{F}:\Omega_\mathcal{F}^k(\CR^{2n})\to\Omega_\mathcal{F}^{k+1}(\CR^{2n}))=W_\mathcal{F}^k(\CR^{2n})\oplus\ud_\mathcal{F}(\Omega_\mathcal{F}^{k-1}(\CR^{2n})) \ ,
\end{equation}where $W_\mathcal{F}^k(\CR^{2n})$ is the set of $k$-forms $\beta\in\Omega_\mathcal{F}^k(\CR^{2n})$ such that $\pounds_{X_i}(\beta)=0$ for all $i$.

Thus, the foliated cohomology groups are given by:
\begin{eqnarray}
H_\mathcal{F}^k(\CR^{2n})&\cong&\displaystyle\bigoplus_{(j_1,\dots, j_k)}\{f_{j_1,\dots, j_k}\in C^\infty(\CR^{2n}) \nonumber \\
&;&  f_{j_1,\dots, j_k}(p)=f(h_1(p),\dots,h_n(p)) \text{and} \ f\big{|}_{\Sigma_{j_1}\cup\cdots\cup\Sigma_{j_k}}=0 \}
\end{eqnarray}where the right hand side has  $\left(\begin{array}{c}n \\ k\end{array}\right)$ summands.
\end{teo}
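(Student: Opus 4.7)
The plan is to replicate, in the smooth category, the argument used to prove Theorem \ref{cohomologycomputed}, and to identify the resulting space $W_\mathcal{F}^k(\CR^{2n})$ with the stated direct sum of function spaces. The proof of Theorem \ref{cohomologycomputed} used analyticity only through the uniqueness of the decomposition $f=f_i+X_i(F_i)$ furnished by Lemma \ref{Evalemma}. In the purely elliptic setting, item 2 of Lemma \ref{Evalemma} tells us that each $X_i$ generates an $S^1$-action, so this uniqueness holds verbatim in $C^\infty(\CR^{2n})$: there are no Taylor flat ambiguities to worry about. This is the structural reason why the analytic argument transports intact to the smooth elliptic case.

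First I would establish the decomposition $\ker(\ud_\mathcal{F})=W_\mathcal{F}^k\oplus\ud_\mathcal{F}(\Omega_\mathcal{F}^{k-1})$. Given a closed $k$-form $\alpha$, I mimic the reduction carried out in the proof of Theorem \ref{cohomologycomputed}: applying Lemma \ref{Evalemma} iteratively to the coefficient functions $\alpha(X_{j_1},\dots,X_{j_k})$ with respect to the vector fields $X_{j_1},\dots,X_{j_k}$ in turn, and using $\ud_\mathcal{F}\alpha=0$ together with $[X_i,X_j]=0$, one arrives at expressions analogous to \eqref{eqdeco1k+1}--\eqref{eqdeco2k+1}. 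Varying which index plays the role of $j_1$ produces several decompositions that must coincide on their common pieces; $S^1$-uniqueness forces compatibility and allows one to collect the pieces into globally defined forms $\beta\in W_\mathcal{F}^k(\CR^{2n})$ and $\zeta\in\Omega_\mathcal{F}^{k-1}(\CR^{2n})$ with $\alpha=\beta+\ud_\mathcal{F}\zeta$. Proposition \ref{welldefineform} and item 4 of Lemma \ref{Evalemma} ensure that these forms are well-defined.

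Next I would show $W_\mathcal{F}^k(\CR^{2n})\cap\ud_\mathcal{F}(\Omega_\mathcal{F}^{k-1}(\CR^{2n}))=\{0\}$. Suppose $\beta\in W_\mathcal{F}^k$ satisfies $\beta=\ud_\mathcal{F}\sigma$. Write the equation $\beta(X_{j_1},\dots,X_{j_k})=\sum_i(-1)^{i+1}X_{j_i}(\sigma(X_{j_1},\dots,\hat{X_{j_i}},\dots,X_{j_k}))$ and apply Lemma \ref{Evalemma} successively to the coefficients of $\sigma$ with respect to $X_{j_1},X_{j_2},\dots,X_{j_k}$. Because $\beta$ is $X_i$-invariant and because each decomposition is now \emph{unique}, the obstruction functions (the $T$'s in the analytic proof) are forced to vanish at every stage, yielding $\beta(X_{j_1},\dots,X_{j_k})=0$. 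This gives the direct sum, hence $H_\mathcal{F}^k(\CR^{2n})\cong W_\mathcal{F}^k(\CR^{2n})$.

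Finally I would unpack $W_\mathcal{F}^k(\CR^{2n})$. By definition any $\beta\in W_\mathcal{F}^k$ is specified by its $\binom{n}{k}$ coefficients $\beta(X_{j_1},\dots,X_{j_k})\in C^\infty(\CR^{2n})$. The condition $\pounds_{X_i}\beta=0$ for every $i$ is equivalent to $X_i$ annihilating each coefficient; item 5 of Lemma \ref{Evalemma} (the elliptic case) then forces each coefficient to factor through $(h_1,\dots,h_n)$, i.e.\ $\beta(X_{j_1},\dots,X_{j_k})(p)=f(h_1(p),\dots,h_n(p))$ for some smooth $f$. Proposition \ref{welldefineform} supplies the vanishing on $\Sigma_{j_1}\cup\cdots\cup\Sigma_{j_k}$. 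This yields the announced description of $H_\mathcal{F}^k(\CR^{2n})$. The main technical subtlety I anticipate is the bookkeeping required to verify that the various partial decompositions produced by Lemma \ref{Evalemma} glue into a single $\beta$ and $\zeta$; this is nontrivial in combinatorial terms but is precisely what smooth $S^1$-uniqueness is designed to handle, so no new analytical input is needed beyond what was used in the analytic case.
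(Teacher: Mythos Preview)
Your proposal is correct and follows exactly the paper's approach: the paper does not give a separate proof of this theorem but simply notes that ``the previous proofs work as well in the smooth category if all vector fields are of elliptic type,'' and you have correctly identified that the only place analyticity entered the proof of Theorem~\ref{cohomologycomputed} was through the uniqueness of the decomposition in Lemma~\ref{Evalemma}, which item~2 of that lemma guarantees in the smooth category precisely when each $X_i$ generates an $S^1$-action. Your explanation is in fact more explicit than the paper's one-line justification.
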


%############################################################################################################################

%############################################################################################################################

\end{document}